\renewcommand{\phi}{\varphi}
\newcommand{\co}{\mathbb{C}}
\newcommand{\Z}{\mathbb{Z}}
\newcommand{\R}{\mathbb{R}}
\newcommand{\rea}{{\rm Re}\,}
\newcommand{\ima}{{\rm Im}\,}
\newcommand{\ff}{\mathcal{F}}
\newtheorem{Thm}{Theorem}[section]
\newtheorem{theorem}[Thm]{Theorem}
\newtheorem{lemma}[Thm]{Lemma}
\newtheorem{mainthm}{Theorem}
\begin{document}
\sloppy

\title[]{Irregular sampling for hyperbolic secant type functions}
\author{Anton Baranov}
\address{Department of Mathematics and Mechanics\\ St. Petersburg State University\\
St. Petersburg, Russia}
\email{anton.d.baranov@gmail.com} 
\author{Yurii Belov}
\address{Department of Mathematics and Computer Science \\ St. Petersburg State University\\
St. Petersburg, Russia}
\email{j\_b\_juri\_belov@mail.ru}

%\date{}

\keywords{Sampling, Gabor systems, shift-invariant space, small Fock spaces}

\subjclass[2000]{Primary 30H20; Secondary 30D10, 30E05, 42C15,  94A20}
\thanks{The work is supported by Russian Science Foundation project 19-71-30002.}

\begin{abstract}
We study Gabor frames in the case when the window function
is of hyperbolic secant type, i.e., $g(x) = (e^{ax}+e^{-bx})^{-1}$, $\rea a, \rea b>0$.
A criterion for half-irregular sampling is obtained:  for a separated $\Lambda\subset\mathbb{R}$ the Gabor system
$\mathcal{G}(g, \Lambda \times \alpha\Z)$ is a frame in $L^2(\R)$ if and only if 
$D^-(\Lambda) >\alpha$ where $D^-(\Lambda)$ is the usual (Beurling) lower density of $\Lambda$.
This extends a result by Gr\"ochenig, Romero, and St\"ockler which applies to the case of a standard hyperbolic secant. 
Also, a full description of complete interpolating sequences for the shift-invariant space generated by $g$
is given.
\end{abstract}

\maketitle

%\maketitle

\section{Introduction and main results}
\label{section1}

Given a function $g\in  L^2(\R)$, its {\it time-frequency shifts} are defined as
$$
M_yT_xg(t) = e^{-2\pi i yt}g(x-t), \qquad x,y\in\R.
$$
Expansions of a function (signal) with respect to the time-frequency shifts of a given {\it window function} $g$
with a certain ``good localization''
form the basis for the time-frequency analysis. 

%For two discrete subsets $\Lambda,  M \subset \R$  
%consider the Gabor system 
 For a discrete subset of the time-frequency plane $S\subset\mathbb{R}^2$, consider {\it the Gabor system}
$$
\mathcal{G}(g, S) = \big\{M_yT_xg: \ (x,y) \in S \big\}.
$$
Then a basic question is to determine whether $\mathcal{G}(g, S)$ is a
frame in $L^2(\R)$, that is, whether there exist  $A,B>0$ such that
$$
A\|f\|^2 \le \sum_{(x,y) \in S} |(f, M_yT_xg)|^2 \le B \|f\|^2, \qquad f\in L^2(\R).
$$
Of special interest is the lattice case $S = \alpha\Z\times\beta\Z$, $\alpha, \beta>0$.
We refer to the monograph \cite{gr01} for a comprehensive exposition of the theory.
It is well known that the density condition $\alpha\beta\leq1$ is necessary for a Gabor system to be a frame \cite{gr01}.

Even in the lattice case the description of the set of $(\alpha, \beta)$ such that 
$\mathcal{G}(g, \alpha\Z\times\beta\Z)$ is a frame is known only for very special classes of functions.
The archetypical example here is the Gaussian $g(x) = e^{-\pi x^2}$
for which the problem of Gabor frames can be reduced to sampling in the Bargmann--Fock space $\mathcal{F}$
via the Bargmann transform. Then one can apply the description of sampling sets in  $\mathcal{F}$ 
 due to Lyubarskii \cite{lyu}, Seip \cite{seip}, and Seip and Wallst\'en \cite{sw} to show 
 that $\mathcal{G}(g, \alpha\Z \times \beta\Z)$ is a frame if and only if
$\alpha\beta<1$. The same is true for symmetric exponential functions
$e^{-|x|}$.  For the trunctated exponential $\chi_{(0, \infty)}(x) e^{-|x|}$,
the system $\mathcal{G}(g, \alpha\Z \times \beta\Z)$ is a frame if and only if $\alpha\beta\leq1$.

In 2002 Jannsen \cite{jan1} proved that the hyperbolic  secant $(e^{ax}+e^{-ax})^{-1}$, $a>0$,
generates a frame if and only if $\alpha\beta<1$.

Only recently analogous results were established for classes of functions in place of individual functions.
In 2011  Gr\"ochenig and St\"ockler \cite{gs1} proved that 
$\mathcal{G}(g, \alpha\Z \times \beta\Z)$ is a frame if and only if
$\alpha\beta<1$ when $g$ is a totally positive function of finite type, while in 2018
Gr\"ochenig, Romero, St\"ockler \cite{grs1} showed that the same is true for Gaussian totally positive 
functions of finite type.  In \cite{gr23} Gr\"ochenig extended these results to general totally positive functions 
under the condition that $\alpha\beta\in\mathbb{Q}$. 
Recently, in \cite{uz} shift-invariant spaces and Gabor frames were studied for the case when $g$ is a rational function
of $e^{ax}$, $a>0$.

Another class of functions for which the Gabor frames of the form
$\mathcal{G}(g, \alpha\Z \times \beta\Z)$  can be decribed was introduced by
Belov, Kulikov and Lyubarskii \cite{bkl}. They proved that a Herglotz function of finite type,
 i.e., $\sum_{k=1}^N\frac{a_k}{x-iw_k}$, where $a_k,\,w_k>0$,
generates a frame if and only if $\alpha\beta\leq1$. Finally, in a recent preprint \cite{belsem} 
the problem is solved for the case of a shifted sinc-kernel. 
In general, the set of parameters $(\alpha,\beta)$ such that 
$\mathcal{G}(g, \alpha\Z \times \beta\Z)$ is a frame can be very complicated, 
e.g., this happens for $g(x)=\chi_{[0,1]}(x)$ \cite{dau} and for the 
Haar function $g(x)=\chi_{[0,1\slash2]}(x)-\chi_{[1\slash2,1]}(x)$ \cite{dai}.

The so called case of irregular sampling where the sets are no longer assumed to be lattices is in general even more 
complicated. The only case when this problem is solved completely is the case of the Gaussian,
and the proof is again based on the description of sampling sets in the Bargmann--Fock space
\cite{lyu, seip, sw}.

\begin{mainthm}
\label{thm:main0}
Let $S \subset \R^2$ be a separated set and let $g(x) = e^{-\pi x^2}$. Then $\mathcal{G}(g, S)$
is a frame for $L^2(\R)$ if and only if 
$$
D_2^-(S) = 
\liminf_{r\to\infty} \frac{\inf_{z\in\co} {\rm card}\,(S \cap B_r(z))}{\pi r^2} >1.
$$
\end{mainthm}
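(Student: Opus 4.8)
The plan is to reduce the Gabor frame problem for the Gaussian to a sampling problem in the Bargmann--Fock space $\ff$ and then invoke the known density characterization of sampling sequences there. First I would introduce the Bargmann transform $B\colon L^2(\R)\to\ff$, a unitary isomorphism onto the space $\ff$ of entire functions $F$ with $\|F\|^2=\int_{\co}|F(z)|^2 e^{-\pi|z|^2}\,dA(z)<\infty$. The crucial computation is the image of a time-frequency shift of the Gaussian: a direct calculation gives
$$
B(M_yT_xg)(z) = c(x,y)\, e^{\pi\bar w z},
$$
where $w=x-iy$ (the sign of the imaginary part being dictated by the convention in the definition of $M_y$) and $|c(x,y)|=e^{-\frac{\pi}{2}|w|^2}$. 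Since $K_w(z)=e^{\pi\bar w z}$ is the reproducing kernel of $\ff$ at $w$, with $\|K_w\|^2=K_w(w)=e^{\pi|w|^2}$, this identifies $B(M_yT_xg)$, up to a unimodular factor, with the normalized reproducing kernel $\kappa_w=K_w/\|K_w\|$.

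Next, using unitarity of $B$ together with the reproducing property $(F,K_w)=F(w)$, I would rewrite the frame inequalities defining $\mathcal{G}(g,S)$ as
$$
A\|F\|^2 \le \sum_{(x,y)\in S} |F(w)|^2 e^{-\pi|w|^2} \le B\|F\|^2, \qquad F\in\ff,
$$
with $w=x-iy$. This is precisely the statement that the set $S'=\{x-iy:(x,y)\in S\}$ is a sampling sequence for $\ff$. Because the map $(x,y)\mapsto x-iy$ is an isometry of the plane, it preserves both separatedness and Beurling densities, whence $D^-(S')=D_2^-(S)$.

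Finally I would invoke the Seip--Wallst\'en description of sampling sequences for the Bargmann--Fock space (\cite{lyu}, \cite{seip}, \cite{sw}): a separated set $S'$ is sampling for $\ff$ if and only if its lower Beurling density exceeds the density of the critical lattice, which with the normalization above equals $1$. Combined with the density identity $D^-(S')=D_2^-(S)$, this yields the claimed equivalence: $\mathcal{G}(g,S)$ is a frame for $L^2(\R)$ if and only if $D_2^-(S)>1$.

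The main obstacle is bookkeeping rather than conceptual: carrying out the Bargmann-transform computation of $B(M_yT_xg)$ with all phases and Gaussian factors tracked correctly, so that the two frame inequalities match exactly and the density normalization comes out to precisely $1$. One should also verify that separatedness of $S$ enters in the right way --- it is forced by the upper frame bound for a system of normalized reproducing kernels, and conversely it guarantees the Bessel (upper) bound in $\ff$ --- which is exactly where the hypothesis that $S$ is separated is used.
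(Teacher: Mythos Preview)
Your outline is correct and is precisely the standard argument: reduce via the Bargmann transform to sampling in the Bargmann--Fock space and apply the Lyubarskii/Seip/Seip--Wallst\'en density theorem. Note, however, that the paper does not give its own proof of this statement; Theorem~\ref{thm:main0} is quoted as a known result, with the sentence preceding it indicating exactly the route you describe (``the proof is again based on the description of sampling sets in the Bargmann--Fock space \cite{lyu, seip, sw}''). So there is nothing to compare beyond observing that your proposal matches the approach the paper attributes to the cited references.
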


Here $B_r(z)$ denotes the disc in $\mathbb{C}$ with the center $z$ and radius $r$ 
and $D_2^-(S) $ denotes the lower planar density of a set $S$.

For rectangular sets $\Lambda\times M$, $\Lambda,M\subset\mathbb{R}$,
and a Cauchy kernel, the description of Gabor frames was found in \cite{bkl2}.

For the case of ``half-regular'' sampling (i.e., $S=\Lambda\times\alpha\mathbb{Z}$) the problem was solved 
for totaly positive functions of finite (Gaussian) type in \cite{grs1,gs1}  and for the secant in
\cite[Theorem 6.2]{grs2}.
Recall that a function $g$ is a totally positive function of finite  type or a Gaussian totally positive function of finite  type
if, respectively,  
$\hat g(t) = \prod_{j=1}^n (1+i \delta_j t)^{-1}$ or 
$\hat g(t) = e^{-ct^2} \prod_{j=1}^n (1+i \delta_j t)^{-1}$, $c>0$, $\delta_j\in \R$.
It should be mentioned that in \cite[Theorem 6.2]{grs2} the problem of half-regular sampling is solved
for a more general class of multi-window Gabor frames. 

\begin{mainthm}
\label{thm:mainA}
Assume that g is a totally positive function of finite  type, or a Gaussian totally positive function of finite  type,
or the hyperbolic secant $(e^{ax}+e^{-ax})^{-1}$.  Let
$\Lambda\subset \R$ be a separated set. Then $\mathcal{G}(g, \Lambda \times \Z)$
is a frame for $L^2(\R)$ if and only if $D^-(\Lambda) > 1$.
\end{mainthm}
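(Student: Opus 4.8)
The plan is to decouple the two coordinates of $S=\Lambda\times\Z$: use the full frequency lattice $\Z$ to convert the Gabor frame inequality into a uniform sampling inequality in the shift-invariant space generated by $g$, and then settle the sampling problem using total positivity, which is the property shared by all three admissible windows.

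\textbf{Step 1 (Fiberization).} First I would exploit the lattice structure in frequency. Since $M_nT_\lambda g(t)=e^{-2\pi int}g(\lambda-t)$, for fixed $\lambda$ the number $(f,M_nT_\lambda g)$ is the $n$-th Fourier coefficient of the $1$-periodization of $t\mapsto f(t)\overline{g(\lambda-t)}$. Applying Plancherel for Fourier series yields, for every $f\in L^2(\R)$,
\[
\sum_{\lambda\in\Lambda}\sum_{n\in\Z}|(f,M_nT_\lambda g)|^2=\int_0^1\sum_{\lambda\in\Lambda}\Big|\sum_{k\in\Z}f(t+k)\,\overline{g(\lambda-t-k)}\Big|^2\,dt.
\]
Writing $c_k(t)=f(t+k)$ and $\phi=\overline{g}$, the inner sum equals $\sum_{\lambda}|H_t(\lambda-t)|^2$, where $H_t(x)=\sum_k c_k(t)\phi(x-k)$ lies in the shift-invariant space $V(\phi)=\{\sum_k d_k\phi(\cdot-k):d\in\ell^2\}$ and $\int_0^1\|c(t)\|_{\ell^2}^2\,dt=\|f\|_2^2$. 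For each of the three windows the integer translates of $\phi$ form a Riesz sequence (since $\widehat{g}$ is nonvanishing with enough decay, the periodization of $|\widehat\phi|^2$ is bounded above and below), so $\|H_t\|_{L^2}^2\asymp\|c(t)\|_{\ell^2}^2$. Hence the frame property of $\mathcal{G}(g,\Lambda\times\Z)$ is \emph{equivalent} to the statement that the translated node sets $\Lambda-t$ are sampling sets for $V(\phi)$, with frame bounds uniform in $t\in[0,1)$.

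\textbf{Step 2 (Sampling in $V(\phi)$ via total positivity).} The totally positive functions of finite type, the Gaussian totally positive functions, and the hyperbolic secant are all P\'olya frequency (totally positive) functions, and I would use this to characterize the separated sampling sets of $V(\phi)$ by Beurling density, proving that a separated $\Lambda$ is sampling if and only if $D^-(\Lambda)>1$. The sufficiency is the substantial direction: total positivity forces the finite sections of the pre-Gramian $\big(\phi(\lambda_i-k)\big)_{i,k}$ to have strictly positive minors, which gives non-degeneracy and uniform invertibility bounds for the associated interpolation problem; combining this with a Nyquist-threshold count and a normal-families/covering argument produces the lower sampling bound. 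The necessity splits as usual: $D^-(\Lambda)\ge1$ is the Landau--Beurling lower density bound for sampling, while the impossibility of a frame exactly at the critical density $D^-(\Lambda)=1$ reflects a Balian--Low--type rigidity specific to these windows (the integer lattice $\Z\times\Z$ already fails to be a frame), and one rules out equality using the fine structure of $V(\phi)$.

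\textbf{Step 3 (Conclusion and main difficulty).} Combining the two steps, $\mathcal{G}(g,\Lambda\times\Z)$ is a frame if and only if every translate $\Lambda-t$ samples $V(\phi)$, that is, if and only if $D^-(\Lambda)>1$. The principal obstacle is the \emph{uniform-in-$t$} sufficiency of Step 2: one must upgrade the pointwise invertibility coming from total positivity to a lower frame bound that is uniform over the continuum of shifts $t\in[0,1)$ and stable under deletion of finitely many nodes. This is exactly where total positivity is indispensable, since it prevents the pre-Gramian from degenerating along any subsequence, and where a careful compactness argument, together with the continuity of $\phi$ and the separation of $\Lambda$, is needed to keep the frame constants bounded away from zero as $t$ varies.
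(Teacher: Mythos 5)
Your Step 1 is sound: it is the standard fiberization reducing the frame property of $\mathcal{G}(g,\Lambda\times\Z)$ to the statement that all translates $\Lambda-t$ are sampling for the shift-invariant space, and this is exactly Theorem \ref{thm:mainB} of the paper, quoted from \cite{grs1} (including the nontrivial point that pointwise sampling bounds self-improve to bounds uniform in $t$). Note, however, that the paper does not prove Theorem \ref{thm:mainA} at all; it is stated as background with references to \cite{gs1, grs1} and \cite[Theorem 6.2]{grs2}. So the substance you must supply is precisely your Step 2, and that is where the proposal breaks down.

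The central claim of your Step 2 --- that a separated $\Lambda$ is sampling for $V(\phi)$ if and only if $D^-(\Lambda)>1$ --- is false. Sampling sets of critical density exist for these very windows: by Theorem \ref{main2} of the paper (and by \cite{bbg} in the Gaussian case), all shifted lattices $\Z+x$ except one value of $x$ modulo $1$, and more generally suitable bounded perturbations $\{n+\delta_n\}$, are complete interpolating --- hence sampling --- for $V^2(g)$, and these have $D^-=1$. Thus the ``only if'' half of your characterization cannot hold; Landau-type necessity gives only $D^-\ge 1$, and equality genuinely occurs for individual sampling sets. The correct dichotomy concerns the whole family of translates: $D^-(\Lambda)>1$ if and only if \emph{every} $\Lambda-t$ is sampling, and the necessity must be proved by exhibiting, when $D^-(\Lambda)\le 1$, some translate that fails --- a Balian--Low-type statement for irregular sets which is itself a deep theorem and does not follow from the failure of the lattice $\Z\times\Z$. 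The sufficiency half is also not established by your sketch: strict positivity of the minors of the pre-Gramian $\big(\phi(\lambda_i-k)\big)$ gives injectivity of finite sections, but injectivity does not yield uniform invertibility, and upgrading the qualitative non-degeneracy to a uniform lower sampling bound is exactly the hard quantitative step; moreover, Beurling's normal-families/weak-limit method does not apply off the shelf because $V(\phi)$ is invariant only under integer translations, not all translations. Closing these gaps is the actual content of \cite{gs1}, \cite{grs1} and \cite[Theorem 6.2]{grs2} (via Zak-transform and factorization arguments reducing to the Gaussian), or, for secant-type windows, of the present paper's alternative route through the isomorphism of $V^2(g)$ with the small Fock space $\mathcal{F}_{\beta,\gamma}$ and Theorems \ref{thm:main0} and \ref{thm:mainD}.
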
 

Here $D^+({\Lambda})$ and $D^{-}({\Lambda})$ denote the  usual
upper and lower (linear) Beurling densities on $\R$,
$$
\begin{aligned}
D^+(\Lambda) & = \lim_{r\to\infty} \frac{\sup_{x\in\R} {\rm card}\,(\Lambda\cap [x, x+r])}{r}, \\
D^-(\Lambda) & = \lim_{r\to\infty} \frac{\inf_{x\in\R} {\rm card}\,(\Lambda\cap [x, x+r])}{r}.
\end{aligned}
$$

In the present paper we consider a more general class of the hyperbolic secant type 
functions. For $a, b \in \mathbb{C}$ 
with $\rea a>0$ and  $\rea b>0$, put
\begin{equation}
\label{hyp}
g(x) = \frac{1}{e^{ax}+e^{-bx}}. 
\end{equation} 

The main result of the paper is the following analog of Theorem B for such functions.
Note that the functions of the form \eqref{hyp} are not (Gaussian) totally positive functions 
when $a, b\notin \R$; for $a,b>0$ the function $g$ is totally positive, but not totally positive of finite type.  
Thus, the main novelty of the paper is in the case of nonreal parameters. 

\begin{theorem}
\label{main1}
 Let  $\Lambda\subset \R$ be a separated set and let $g$ be given by \eqref{hyp}. Then the following are equivalent:
 
 1. $\mathcal{G}(g, \Lambda \times \alpha\Z)$ is a frame in $L^2(\R)$\textup;
 
 2. $\mathcal{G}(g, \alpha\Z \times \Lambda)$ is a frame in $L^2(\R)$\textup;
 
 3. $D^-(\Lambda) >\alpha$.
  \end{theorem}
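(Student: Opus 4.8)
The plan is to prove the equivalence $1\Leftrightarrow 3$ in full, and then to deduce $2\Leftrightarrow 3$ by Fourier duality. For the duality, recall that the Fourier transform intertwines time and frequency shifts, $\widehat{M_yT_xg}=c\,M_{-x}T_y\hat g$ with $|c|=1$, so $\mathcal G(g,\Lambda\times\al\Z)$ is a frame if and only if $\mathcal G(\hat g,\al\Z\times(-\Lambda))$ is. Writing $e^{ax}+e^{-bx}=2e^{(a-b)x/2}\cosh(\tfrac{a+b}{2}x)$ shows that $g$ is a complex modulation of a hyperbolic secant, so $\hat g$ is, up to a nonzero constant and a complex translation of its argument, again a window of the form \eqref{hyp}. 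Since a multiplicative constant, a translation, and the reflection $\Lambda\mapsto-\Lambda$ affect neither the frame property nor the value of $D^-(\Lambda)$, the equivalence $2\Leftrightarrow 3$ follows once $1\Leftrightarrow 3$ is established for \emph{every} window of the form \eqref{hyp}.

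For $1\Leftrightarrow 3$ I would first reduce the half-regular Gabor problem to a sampling problem in a shift-invariant space. Writing $\langle f,M_{\al n}T_\lambda g\rangle$ as a Fourier coefficient of $f(\cdot)\overline{g(\lambda-\cdot)}$ and applying Parseval on the lattice $\al\Z$ (periodization with period $1/\al$), the frame inequalities for $\mathcal G(g,\Lambda\times\al\Z)$ turn into a fibered family of frame inequalities for the vectors $(g(\lambda-t-m/\al))_{m}$, $\lambda\in\Lambda$, in $\ell^2$, with bounds uniform in the fiber variable $t$. By the now standard duality between Gabor frames and sampling in shift-invariant spaces (going back to Gr\"ochenig--St\"ockler) this is equivalent to $\Lambda$ being a set of sampling for $V(g)=\{\sum_m c_m g(\cdot-m/\al):(c_m)\in\ell^2\}$. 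The hypothesis $\rea a,\rea b>0$ gives the exponential decay of $g$ and the two-sided boundedness of $\sum_k|\hat g(\xi+k\al)|^2$, so $\{g(\cdot-m/\al)\}_m$ is a Riesz basis of $V(g)$ and the sampling norm is comparable to $\|(c_m)\|_{\ell^2}$. The Nyquist density of $V(g)$ is $\al$, which is exactly what makes $D^-(\Lambda)>\al$ the expected threshold.

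The heart of the matter is to analyze sampling in $V(g)$, and this is where the small Fock spaces enter. Using $g(z)=e^{bz}/(e^{(a+b)z}+1)$ and the substitution $w=e^{(a+b)z}$, each $f\in V(g)$ becomes $f(z)=w^{b/(a+b)}F(w)$ with $F(w)=\sum_m \tilde c_m\,(w+\eta^m)^{-1}$, where $\eta=e^{(a+b)/\al}$ and $\tilde c_m=c_m\eta^{ma/(a+b)}$; thus $F$ is a sum of Cauchy kernels whose poles $-\eta^m$ are spaced geometrically along a ray or a logarithmic spiral. I would show that, with the weight inherited from the $V(g)$-norm, the space of such $F$ is identified, up to equivalence of norms, with a small Fock space $\ff$, i.e.\ a space of analytic functions with a subquadratic, logarithmic-type weight, under which the real sampling set $\Lambda$ is carried to $\{e^{(a+b)\lambda}:\lambda\in\Lambda\}$. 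Sampling sets for $\ff$ are then described by a Seip-type density criterion, and transporting the critical density back through $w=e^{(a+b)z}$ yields precisely $D^-(\Lambda)>\al$; the strictness of the inequality reflects the failure of sampling at the critical density, while complete interpolating sequences appear exactly at the critical density, a feature of small, as opposed to classical, Fock spaces.

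The main obstacle is the final step in the genuinely complex case $a,b\notin\R$. When $a+b\notin\R$ the poles $-\eta^m$ spiral rather than lie on a ray, the associated Fock weight is non-radial and tilted, and the classical radial-weight sampling theorems do not apply; one must develop, or invoke the companion results on, sampling and complete interpolation in these tilted small Fock spaces, and, crucially, verify that the nonlinear map $w=e^{(a+b)z}$ transports the Beurling density $D^-(\Lambda)$ to the correct Fock-space density with threshold normalized to $\al$. This density bookkeeping, together with the exactness of the reduction in the first step (uniformity of the fiber bounds and the Riesz-basis property of the translates), are the two places requiring real care; the remaining estimates are routine once the sampling theory for $\ff$ is in hand.
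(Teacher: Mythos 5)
Your outline tracks the paper's road map for most of the way: Fourier duality for $1\Leftrightarrow 2$, the Gr\"ochenig--Romero--St\"ockler reduction to sampling in a shift-invariant space, and the substitution $w=e^{(a+b)x}$ turning $V^2(g)$ into a space of discrete Cauchy transforms that coincides with a small Fock space (the paper's Theorem \ref{coin}; incidentally, your worry about a ``non-radial, tilted'' weight when $a+b\notin\R$ is unfounded --- the weight obtained is radial, $\phi(z)=\beta\log^2|z|+\gamma\log|z|$ with $\beta,\gamma$ depending only on $\rea a$, $\rea b$, even though the poles spiral). But the concluding step has two genuine gaps. First, the reduction is misstated: by Theorem \ref{thm:mainB}, the frame property of $\mathcal{G}(g,\Lambda\times\alpha\Z)$ is equivalent to $\Lambda+x$ being sampling for $V^2(g)$ \emph{for every} $x\in\R$, not to $\Lambda$ alone being sampling; your fiber variable $t$ is exactly this shift and cannot be suppressed. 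Second, and more fundamentally, the ``Seip-type density criterion'' you want to invoke or develop for sampling in the small Fock space does not exist and is in fact false: small Fock spaces admit complete interpolating sequences (Theorem \ref{compin}), which are sampling sequences of exactly critical density, so sampling of a single sequence is not characterized by a strict density inequality. Your own text contains the contradiction --- you say sampling ``fails at the critical density'' while complete interpolating sequences ``appear exactly at the critical density,'' but a complete interpolating sequence \emph{is} sampling. Combining the two missteps, your argument would prove ``$\Lambda$ sampling for $V^2(g)$ iff $D^-(\Lambda)>\alpha$,'' which is false: any complete interpolating sequence for $V^2(g)$ (e.g.\ a suitable shift of $\Z$ when $\alpha=1$) is a counterexample.

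What is missing is precisely the paper's device for circumventing this. Instead of proving a density theorem inside $\mathcal{F}_{\beta,0}$, the paper uses a \emph{second} isomorphism, between $\mathcal{F}_{\beta,0}$ and the shift-invariant space $V^2(h)$ of the complex Gaussian $h(x)=e^{-(a+b)x^2}$ (Subsection \ref{gaus}). Under it, the condition ``$\{e^{(a+b)(\lambda_m+x)}\}$ is sampling for $\mathcal{F}_{\beta,0}$ for all $x\in\R$'' becomes ``$\Lambda+x$ is sampling for $V^2(h)$ for all $x\in\R$,'' which by Theorem \ref{thm:mainB} (applied now in the reverse direction) is the frame property of $\mathcal{G}(h,\Lambda\times\Z)$. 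For the complex Gaussian this frame problem is settled (Theorem \ref{thm:mainD}) by the unitary conjugation $f\mapsto e^{i\sigma x^2}f$, which shears $\Lambda\times\Z$ into a planar set of the same lower planar density, followed by the Lyubarskii--Seip--Wallst\'en irregular sampling theorem for the Gaussian (Theorem \ref{thm:main0}). It is this detour --- exploiting the quantifier ``for all shifts'' and the planar density theorem, rather than any density criterion internal to the small Fock space --- that produces the strict inequality $D^-(\Lambda)>\alpha$; without it, your plan cannot close.
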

  
  In Theorem \ref{main1} we restrict ourselves with the case of separated sequences, since any frame of the form
  $\mathcal{G}(g, \Lambda \times \alpha\Z)$  contains a frame $\mathcal{G}(g, \Lambda' \times \alpha\Z)$ where
  $\Lambda'\subset \Lambda$  is separated.
  
  Note also that all results (with the same proofs) remain true for a slightly wider class of functions  
  $g(x) = (\alpha e^{ax}+\beta e^{-bx})^{-1}$, where $\rea a, \, \rea b>0$, $\alpha, \beta \in \mathbb{C}\setminus\{0\}$,
  subject to the condition that $\alpha e^{ax}+\beta e^{-bx}\ne 0$ on $\R$.
  
  Equivalence of Statements 1 and 2  follows from the fact that the systems $\mathcal{G}(g, \Lambda \times M)$ 
  and $\mathcal{G}(\hat g, M\times \Lambda)$ are or are not frames simultaneously and that $\hat g$ also is a
  hyperbolic secant type function.
  
One of the main tools for the study of Gabor frames is the so called shift-invariant space generated by the window 
function $g$. In what follows we will assume that $g$ belongs to the so-called {\it Wiener amalgam space} $W_0$, that is, 
$g$ is continuous on $\R$ and $\sum_{k\in \Z} \max_{x\in [k, k+1]} |g(x)| <\infty$. With any $g\in W_0$
satisfying the condition
\begin{equation}
\label{stab}
C_1\|(c_n)\|_{\ell^2} \le \bigg\|\sum_{n\in \Z} c_ng(x-n)\bigg\|_{L^2(\R)}  \le C_2\|(c_n)\|_{\ell^2}, \qquad (c_n) \in\ell^2(\Z),
\end{equation}
for some constants $C_1, C_2>0$, we associate the {\it shift-invariant space}
$$
V^2(g) =\big\{f(x) = \sum_{n\in \Z} c_ng(x-n) : \ (c_n) \in\ell^2(\Z) \big\} \subset L^2(\R).
$$

Recall that a sequence $\Lambda = \{\lambda_n\}_{n\in\Z} \subset \R$ is said to be {\it sampling for $V^2(g)$}
if there exist $A,B>0$ such that
$$
   A\|f\|^2 \le \sum_{n\in \Z} |f(\lambda_n)|^2 \le B\|f\|^2, \qquad f\in V^2(g).
$$
The following theorem \cite[Theorem 2.3]{grs1} establishes a relation between the description of Gabor
frames and the sampling in $V^2(g)$: 
 
\begin{mainthm}
\label{thm:mainB}
Let $g\in W_0$ satisfy \eqref{stab}. Then the system  $\mathcal{G}(g, \Lambda\times \mathbb{Z})$ is a frame in 
$L^2(\mathbb{R})$ if and only if $\Lambda + x$ is a sampling set for $V^2(g)$ for all $x\in \R$.
\end{mainthm}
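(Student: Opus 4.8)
The plan is to reduce the Gabor frame property to a sampling property in $V^2(g)$ through a fiberization (periodization) identity, and then to trade the continuum of fiber weights for a uniform sampling bound. First I would fix $f\in L^2(\R)$ and, writing each integral over $\R$ as $\sum_{k\in\Z}\int_0^1(\cdots)\,dt$, recognize $(f,M_nT_\lambda g)$ as the $n$th Fourier coefficient (up to sign convention) of the $1$-periodic function $t\mapsto\sum_{k}f(t+k)\overline{g(\lambda-t-k)}$. Parseval in $n$ followed by Tonelli in $\lambda$ gives the key identity
\[
\sum_{\lambda\in\Lambda}\sum_{n\in\Z}|(f,M_nT_\lambda g)|^2=\int_0^1\sum_{\lambda\in\Lambda}|F_t(\lambda-t)|^2\,dt,
\]
where $F_t(u)=\sum_{k\in\Z}\overline{f(t+k)}\,g(u-k)\in V^2(g)$ for a.e. $t$. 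By \eqref{stab} and the identification $L^2(\R)\cong L^2([0,1];\ell^2(\Z))$, the map $f\mapsto\{F_t\}_{t\in[0,1]}$ is a bounded bijection of $L^2(\R)$ onto $L^2([0,1];V^2(g))$ with bounded inverse; in particular $C_1^2\|f\|_2^2\le\int_0^1\|F_t\|_2^2\,dt\le C_2^2\|f\|_2^2$. Thus the Gabor frame inequalities become the integrated sampling inequalities $A\|f\|^2\le\int_0^1\sum_\lambda|F_t(\lambda-t)|^2\,dt\le B\|f\|^2$, where the sampling sets are the translates $\Lambda-t$, $t\in[0,1]$.

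Write $\Sigma_h(t)=\sum_{\lambda\in\Lambda}|h(\lambda-t)|^2$ for $h\in V^2(g)$. The upper bound is the easy half: for $g\in W_0$ and any set of a fixed separation (and $\Lambda-t$ has the separation of $\Lambda$ for every $t$) one has the uniform Bessel estimate $\Sigma_h(t)\le B\|h\|^2$, so the integrated upper bound, and hence the Gabor Bessel bound, holds automatically. For the implication \emph{frame $\Rightarrow$ sampling} I would test the integrated inequality on functions concentrated in a single fiber: given $t_0$ and $h=\sum_k c_k g(\cdot-k)$, choose $f$ with $f(t+k)=\overline{c_k}\,\psi(t)$ for a bump $\psi$ supported near $t_0$, so that $F_t=\overline{\psi(t)}\,h$. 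The lower frame bound then reads $A\|\psi\|^2\|(c_k)\|^2\le\int_0^1|\psi(t)|^2\Sigma_h(t)\,dt$; letting $\psi$ concentrate at $t_0$ and using that $\Sigma_h$ is continuous (a uniformly convergent sum of continuous functions) yields $\Sigma_h(t_0)\ge A\,C_2^{-2}\|h\|^2$. This gives sampling of $\Lambda-t_0$ for every $t_0\in[0,1)$ with a constant independent of $t_0$, and integer-shift invariance of $V^2(g)$ (replacing $h$ by $h(\cdot-m)$ turns the effective set $\Lambda-t_0$ into $\Lambda-t_0-m$, with $\|h(\cdot-m)\|=\|h\|$) promotes this to sampling of $\Lambda+x$ for every $x\in\R$.

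For the converse \emph{sampling $\Rightarrow$ frame} the only nontrivial point, and the main obstacle, is to upgrade the pointwise-in-$t$ hypothesis (each $\Lambda-t$ is sampling, with an a priori $t$-dependent lower constant $A(t)>0$) to a uniform bound $\inf_{t\in[0,1]}A(t)>0$; once this is available, integrating $\Sigma_{F_t}(t)\ge(\inf_t A(t))\|F_t\|^2$ over $t$ and invoking the norm equivalence closes the argument. To obtain the uniformity I would run a perturbation-stability argument for sampling in $V^2(g)$: for $h\in V^2(g)$ the difference $h(\lambda-t)-h(\lambda-t_0)$ equals $\eta(\lambda-t_0)$ with $\eta=(T_{t-t_0}-I)h$, and since the integer shifts of $(T_s-I)g$ have a Bessel bound tending to $0$ as $s\to0$ (this is where the regularity of $g\in W_0$ enters, through $\|T_sg-g\|\to0$ in the amalgam norm), one gets $|\Sigma_h(t)^{1/2}-\Sigma_h(t_0)^{1/2}|\le\omega(|t-t_0|)\|h\|$ with $\omega(s)\to0$. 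Hence each $t_0$ has a neighborhood on which $A(t)\ge\tfrac14A(t_0)$; compactness of $[0,1]$ and a finite subcover then force $\inf_{t\in[0,1]}A(t)>0$. The delicate step, and the one I expect to require the most care, is precisely this equicontinuity/perturbation estimate, which is what rules out the sampling constants degenerating, i.e. the mass of near-counterexamples escaping to infinity, as $t$ ranges over the fiber interval.
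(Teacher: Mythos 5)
Your argument is correct, but there is no proof in the paper to measure it against: the paper states this result as Theorem C and simply cites \cite[Theorem 2.3]{grs1}, and the remark that follows the statement in the paper records precisely the self-improvement to uniform sampling constants that you identified as the delicate step. What you have written is in substance a reconstruction of the Gr\"ochenig--Romero--St\"ockler argument. The periodization/Parseval identity $\sum_{\lambda,n}|(f,M_nT_\lambda g)|^2=\int_0^1\sum_\lambda|F_t(\lambda-t)|^2\,dt$ is the standard fiberization of the Gabor frame operator over the fibers of $V^2(g)$; your bump-function test correctly yields frame $\Rightarrow$ sampling for every translate $\Lambda-t_0$ with the $t_0$-independent constant $AC_2^{-2}$, using the (valid) continuity of $t\mapsto\sum_\lambda|h(\lambda-t)|^2$; and your equicontinuity estimate --- resting on strong continuity of translation in the amalgam norm for $g\in W_0$, the Bessel-type bound $\bigl(\sum_\lambda|\eta(\lambda-t_0)|^2\bigr)^{1/2}\lesssim\|T_sg-g\|_{W_0}\|(c_k)\|_{\ell^2}$ for $\eta=\sum_k c_k\,(T_sg-g)(\cdot-k)$, and compactness of $[0,1]$ --- is exactly what rules out degeneration of the lower constants $A(t)$ and closes the converse. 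One point of hygiene: the statement as transcribed in the paper does not assume $\Lambda$ separated (the original hypothesis in \cite{grs1} is relative separation), while your proof uses separation in all the amalgam/Bessel estimates; this is consistent with how the paper applies the theorem (Theorem \ref{main1} concerns separated $\Lambda$ only), but a proof of the statement verbatim should either assume relative separation or note that without it the frame property and the sampling upper bound fail simultaneously, so the equivalence persists trivially.
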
 

Thus, the condition of Theorem \ref{thm:mainB}  says that, for any $x\in \R$, there exist $A_x,B_x>0$ such that
\begin{equation}
\label{frx}
A_x\|f\|^2 \le \sum_{n\in \Z} |f(\lambda_n+x)|^2 \le B_x \|f\|^2, \qquad f\in V^2(g).
\end{equation}
However, it is shown in \cite[Theorem 2.3]{grs1} that once \eqref{frx} holds with constants depending on $x$, 
a similar inequality holds with uniform bounds on sampling constants, i.e., there exist $A,B>0$ such that
$$
A\|f\|^2 \le \sum_{n\in \Z} |f(\lambda_n+x)|^2 \le B \|f\|^2, \qquad f\in V^2(g), \ x\in\R.
$$

We say that $\Lambda$ is {\it interpolating for $V^2(g)$} if for every $(a_n) \in \ell^2(\Z)$ 
there exists $f\in V^2(g)$ such that $f(\lambda_n) = a_n$. If $\Lambda$ is both sampling and interpolating
(or, equivalently, the solution of the interpolation problem is unique), we say that
$\Lambda$ is a {\it complete interpolating sequence}.  

Sampling and interpolation in the spaces $V^2(g)$ 
for totally positive functions and Gaussian totally positive functions of finite type
were studied in \cite{gs1, grs1}, where it was shown, in particular, that every separated sequence $\Lambda \subset \R$ 
 with $D^{-}({\Lambda})> 1$  is sampling for $V^2(g)$. In \cite{bbg} a full description of complete interpolating sequences
 was found for $V^2(g)$ in the Gaussian case $g(x) = e^{-cx^2}$. We find a similar description for
 hyperbolic secant type functions:
 
 \begin{theorem}
 \label{main2}
 Let $g$ be given by \eqref{hyp} with  $\rea a, \, \rea b>0$. Then an increasing sequence $\Lambda
\subset\R $ is a complete interpolating sequence for $V^2(g)$ if and only if $\Lambda$ 
is separated and there exists an enumeration $\Lambda=\{\lambda_n\}_{n\in \Z}$, $\lambda_n = n+\delta_n$, $n\in\Z$,
such that
   \begin{enumerate}
   \item[(a)] $(\delta_n)_{n\in\Z}\in \ell^\infty$\textup;
   \item[(b)] there exists $N\geq 1$  such that
$$
\sup_{n\in\Z} \frac{1}{N} \Big|\sum_{k=n+1}^{n+N} \Big(\delta_k -
\frac{\rea a-\rea b}{2(\rea a +\rea b)} \Big) \Big|  <\frac{1}{2}.
$$ 
 \end{enumerate}
 
Also, if $\Lambda$ is a separated sequence with $D^{-}({\Lambda})> 1$, then $\Lambda$ contains 
a complete interpolating sequence for $V^2(g)$, while any separated sequence with
 $D^{+}({\Lambda})< 1$ can be appended to a complete interpolating sequence for $V^2(g)$.
 \end{theorem}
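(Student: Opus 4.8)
The plan is to transport the problem into an explicit space of analytic functions in which complete interpolating sequences become Riesz bases of reproducing kernels. The elementary observation that drives everything is that the substitution $y=e^{(a+b)x}$ linearizes the translation structure of $g$: writing each translate in the variable $y$ one gets
\begin{equation*}
f(x)=\sum_{n\in\Z}c_ng(x-n)=e^{bx}\sum_{n\in\Z}\frac{c_ne^{an}}{e^{(a+b)x}+e^{(a+b)n}}=e^{bx}\sum_{n\in\Z}\frac{d_n}{y+e^{(a+b)n}},
\end{equation*}
with $d_n=c_ne^{an}$ and $y=e^{(a+b)x}$, so that, up to the scalar factor $e^{bx}$, every $f\in V^2(g)$ is the Cauchy transform of a discrete sequence with nodes on the geometric progression $\{-e^{(a+b)n}\}$. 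In the $x$-variable these nodes are exactly the poles $\{i\pi(2k+1)/(a+b)\}$ of $g$, so the relevant generating function is of sine/$\cosh$ type, built from the denominator $e^{ax}+e^{-bx}$. First I would make this correspondence precise: check that \eqref{stab} holds, and identify $V^2(g)$, isometrically up to equivalence of norms, with a de Branges space $\mathcal{H}(E)$ (the exponential weight hidden in $d_n=c_ne^{an}$ is what produces a small Fock-type norm, exactly as in the Gaussian treatment of \cite{bbg}). Under this identification, sampling $f$ at $\lambda_n=n+\delta_n$ is evaluation of the transform at $y_n=e^{(a+b)\lambda_n}$, and $\Lambda$ is a complete interpolating sequence for $V^2(g)$ precisely when the reproducing kernels $\{k_{\lambda_n}\}$ form a Riesz basis of $\mathcal{H}(E)$.

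Next I would invoke the description of Riesz bases of reproducing kernels in de Branges spaces, in the spirit of Hru\v{s}\v{c}\"ev--Nikolskii--Pavlov: such a basis exists if and only if there is a generating function $G$ vanishing exactly on $\Lambda$ for which the weight $|G/E|^2$ satisfies the Muckenhoupt $(A_2)$ condition on the relevant line, together with the usual density/minimality bookkeeping. The heart of the proof is then to translate this into the two concrete requirements. Separation and $(\delta_n)\in\ell^\infty$ in (a) are forced by the boundedness of $E$ and by the fact that a complete interpolating sequence must have density exactly $1$. Condition (b) is the quantitative form of $(A_2)$: the phase of $G$ relative to that of $E$ is controlled by the running sums $\sum_{k=n+1}^{n+N}\delta_k$, and $(A_2)$ holds if and only if these averages stay uniformly bounded away from the critical value. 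The displacement $\frac{\rea a-\rea b}{2(\rea a+\rea b)}=\frac12-\frac{\rea b}{\rea a+\rea b}$ is exactly the mismatch between the power weight $|y|^{2\rea b/(\rea a+\rea b)}$ induced by the factor $e^{bx}$ and the balanced exponent $\tfrac12$; it records how far the centre of mass of the weight is displaced from the integer lattice and vanishes precisely when $\rea a=\rea b$, recovering the ordinary-secant case (Theorem~\ref{thm:mainA}).

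I expect the main obstacle to be the case of nonreal $a,b$. For real parameters the nodes $-e^{(a+b)n}$ lie on a ray and $\mathcal{H}(E)$ is a genuine de Branges space symmetric about $\R$, so the classical $(A_2)$ machinery applies verbatim; for $a,b\notin\R$ the nodes spiral in $\co$, the generating function is no longer real on the line, and one must run a non-self-adjoint, complex-phase version of the theory while controlling the geometry of the spiral in order to extract a clean real condition on the $\delta_n$. Pinning down the sharp threshold $\tfrac12$ in (b), rather than some inexplicit constant, is the delicate point: it amounts to showing that an averaged deviation reaching $\tfrac12$ is exactly what destroys $(A_2)$ (the counting function of $\Lambda$ overtaking that of the zero set of $E$), and this has to be proved in both directions of the inequality. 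Verifying that the constructed $E$ lies in the correct Hermite--Biehler class and that the transform above is bounded with bounded inverse for complex $a,b$ is a further technical ingredient feeding this step.

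Finally, the two density statements follow from the characterization by perturbation arguments, as in \cite{bbg}. If $\Lambda$ is separated with $D^-(\Lambda)>1$, I would thin it greedily to a subsequence $\{n+\delta_n\}$ for which (a) and (b) hold, the surplus density giving precisely the room needed to keep $\frac1N\sum_{k=n+1}^{n+N}\delta_k$ uniformly below $\tfrac12$. Conversely, if $D^+(\Lambda)<1$, the density deficit allows one to insert finitely many points in each block and build a complete interpolating sequence containing $\Lambda$, again by monitoring the partial sums of the $\delta_n$. Both constructions reduce to the same bookkeeping on running sums that underlies condition (b).
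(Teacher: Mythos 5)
Your opening step---the substitution $z=e^{(a+b)x}$ turning every $f\in V^2(g)$ into a discrete Cauchy transform with nodes on the geometric progression $\{-e^{(a+b)n}\}_{n\in\Z}$---is exactly the paper's Section \ref{cauchy}, and your closing step (the two density statements by perturbation arguments as in \cite{bbg}) also agrees with the paper. The problem is the core of your plan: identifying the resulting space with a de Branges space $\mathcal{H}(E)$ and characterizing Riesz bases of reproducing kernels by a Hru\v{s}\v{c}\"ev--Nikolskii--Pavlov type Muckenhoupt $(A_2)$ condition. This cannot work as stated, and it is precisely where the difficulty of the theorem sits. The space of Cauchy transforms here, $\mathcal{H}_{a,b}$, consists of functions analytic only in $\co\setminus\{0\}$ with essential singularities at $0$ and $\infty$ (the nodes accumulate at both points), so it is not a de Branges space of entire functions; and back in the $x$-variable the elements of $V^2(g)$ have poles on a two-dimensional lattice-like set $\{n+i\pi(2k+1)/(a+b)\}$, so no sine-type generating function is available there either. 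More importantly, even for real $a,b$---where you claim the ``classical $(A_2)$ machinery applies verbatim''---the nodes $e^{(a+b)n}$ are exponentially lacunary, and the HNP/Pavlov theory, which presupposes Paley--Wiener-type regularity of the phase, is simply not available for such sparse spectra. What replaces it is the theory of \emph{small Fock spaces}: the paper proves (Theorem \ref{coin}, the main technical step) that $\mathcal{H}_{a,b}=\mathcal{F}_{\beta,\gamma}$ with $\beta=\frac{1}{2\rea(a+b)}$, $\gamma=\frac12+\frac{\rea a}{\rea(a+b)}$, via estimates of reproducing kernels and a biorthogonal Riesz basis argument, then scales away $\gamma$ (Lemma \ref{lem2}) and invokes the known description of complete interpolating sequences in $\mathcal{F}_{\beta,0}$ (Theorem \ref{compin}, from \cite{bdhk}, extended to Laurent series). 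Your proposal contains no substitute for this ingredient.

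There is also internal evidence that $(A_2)$ is the wrong mechanism here. By Theorem \ref{compin}, among the shifted lattices $\Z+\tau$ exactly one residue of $\tau$ modulo $1$ fails to be a complete interpolating sequence for $V^2(g)$ (the paper notes this after the proof of Theorem \ref{main2}); in any translation-invariant, sine-type $(A_2)$ framework every shift of a complete interpolating sequence is again one, so no criterion of that form can yield condition (b). Relatedly, for exponentials the Avdonin-type averaged condition is sufficient but never necessary, whereas in the present setting condition (b) is an exact characterization; your text concedes that the threshold $\frac12$ ``has to be proved in both directions'' but supplies no mechanism for either direction. (A smaller inaccuracy: boundedness of $(\delta_n)$ does not follow from the density being $1$, e.g.\ $\lambda_n=n+\sqrt{|n|}$; in the paper it is part of the imported characterization.) Your correct observations---the reduction to Cauchy transforms, the interpretation of the constant $\frac{\rea a-\rea b}{2(\rea a+\rea b)}$ as a weight displacement vanishing when $\rea a=\rea b$, and the final density claims---all survive, but the central characterization step must be done by the different route above.
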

 
 As in \cite{bbg}, the proof is based on a reduction to the study of sampling (or interpolating) sequences
 in a certain Fock type space. More precisely, we first establish 
 a natural isomorphism between $V^2(g)$ and a certain space of discrete Cauchy transforms and then
 show that this space coincides with a Fock type space of functions with singularities at $0$ and $\infty$.
 
 We conclude with the following conjecture that for the secant type function one has a complete description 
 of irregular sampling similar to that for the Gaussian:
 \medskip
 \\
 {\bf Conjecture.} {\it    Let $g$ be given by \eqref{hyp} and let  $S$ be a separated subset of $\mathbb{R}^2$.  
 Then $\mathcal{G}(g, S)$ is a frame for $L^2(\R)$ if and only if  $D^-(S)>1$.  }
 \medskip
 \\
 \textbf{Notations.} In what follows we write $U(x)\lesssim V(x)$ if 
there is a constant $C$ such that $U(x)\leq CV(x)$ holds for all $x$ 
in the set in question. We write $U(x)\asymp V(x)$ if both $U(x)\lesssim V(x)$ and
$V(x)\lesssim U(x)$. The standard Landau notations
$O$ and $o$ also will be used.
\medskip
\\
\textbf{Acknowledgement.} The authors are grateful to Karlheinz Gr\"ochenig for numerous 
helpful comments. 
\bigskip
\bigskip
 
 %%%%%%%%%%%%%%%%%%%%%%%%%%%%%%%%%%%%%%%

\section{Relation between $V^2(g)$ and a space of Cauchy transforms} 
\label{cauchy}
 
Throughout the rest of the paper we may assume without loss of generality that $a=1$  in \eqref{hyp}
(otherwise consider the window function $g(x+c)$ for an appropriate choice of $c\in\R$). 
 
Let $f\in V^2(g)$. Then there exists $(c_n) \in \ell^2(\Z)$ such that
$$
 f(x) =  \sum_{n\in \Z} c_ng(x-n) =  \sum_{n\in \Z} \frac{c_n}{e^{a(x-n)} +  e^{-b(x-n)}}   
 =   e^{bx}\sum_{n\in \Z} \frac{c_n e^{an} }{e^{(a+b)x} + e^{(a+b)n}}.
$$  
Introducing a new variable $z = e^{(a+b)x} $ we will consider this sum as a discrete Cauchy transform. 
More precisely, consider the zero order entire function
$$
A(z) = \prod_{n=1}^\infty \bigg(1 +\frac{z}{e^{(a+b)n}}\bigg)
$$
and put 
\begin{equation}
\label{gen}
G(z) = (1+z) A(z)A(1/z).
\end{equation}
Note that $G$ has two essential singularities at $0$ and $\infty$. Finally, let
$$
\mathcal{H}_{a,b} = \bigg\{ F(z) = G(z) \sum_{n\in \Z} \frac{c_n e^{an} }{z+ e^{(a+b)n}}: \ (c_n)\in\ell^2(\Z) \bigg\}
$$
and define $\|f\|_{\mathcal{H}_{a,b}} = \|(c_n)\|_{\ell^2(\Z)} $. 
It is easy to see that $\mathcal{H}_{a,b}$ is a Hilbert space 
with respect to this norm, which consists of functions holomorphic in $\mathbb{C} \setminus \{0\}$. The mapping
\begin{equation}
\label{daf}
f(x) = \sum_{n\in \Z} c_n g(x-n) \in V^2(g) \ \mapsto \ F(z) = G(z) \sum_{n\in \Z} \frac{c_n e^{an}}{z+e^{(a+b)n}}
\end{equation}
is an isomorphism of $V^2(g)$ onto $\mathcal{H}_{a,b}$. 

The space $\mathcal{H}_{a,b}$ is a reproducing kernel Hilbert space. It is immediate that the reproducing kernel 
of  $\mathcal{H}_{a,b}$ at a point $w$ is given by
$$
k^{a,b}_w(z) =   \sum_{n\in\Z} \frac{G(z)\overline{G(w)}e^{2n \rea a}}{(\bar w+ e^{(\bar a+\bar b)n})(z+ e^{(a+b)n})};
$$
in particular, for  $w=-w_m = - e^{(a+b)m}$ one has
$$
k^{a,b}_{-w_m} (z) =\overline{G'(-w_m)}e^{2 m\rea a} \frac{G(z)}{z+w_m}.
$$
Note that the reproducing kernels $\{k^{a,b}_{-w_m}\}_{m\in\Z}$ form an orthogonal basis of $\mathcal{H}_{a,b}$.

General Hilbert spaces of discrete Cauchy transforms were introduced in \cite{bms}. In the case when the poles accumulate
only to infinity corresponding spaces of entire functions were studied in detail in \cite{abb, loc2, bar18, bar23}.
\bigskip

%%%%%%%%%%%%%%%%%%%%%%%%%%%%%%%%%%%%%%%

\section{Small Fock type spaces} 
In this Section we introduce small Fock type spaces $\mathcal{F}_{\beta, \gamma}$. In the next Section we show that these spaces coincide with spaces $\mathcal{H}_{a,b}$
with equivalence of norms.
For $\beta>0$, $\gamma \in\R$, put $\phi(z) = \beta \log^2 |z| + \gamma \log|z|$ and let
$$
\mathcal{F}_{\beta, \gamma} = 
\bigg\{F\in Hol(\mathbb{C} \setminus\{0\})  : \  \|F\|_{\beta, \gamma}^2=  \int_{\mathbb{C}} 
|F(z)|^2 e^{-2\phi(z)} dm_2(z) <\infty \bigg\},
$$
where $m_2$ is the Lebesgue measure $\frac{1}{\pi} dxdy$.
Then $\mathcal{F}_{\beta, \gamma}$ is a reproducing kernel Hilbert space. 

Clearly, the system $\{z^n\}_{n\in\mathbb{Z}}$ is an orthogonal basis in $\mathcal{F}_{\beta, \gamma}$ and it is easy to see that, if 
$F(z) = \sum_{n\in\Z} c_nz^n$, then
\begin{equation}
\label{pow}
\|F\|_{\mathcal{F}_{\beta, \gamma}}^2 =
\sqrt{\frac{2\pi}{\beta}} \sum_{n\in \Z} \exp\Big(\frac{(n+1-\gamma)^2}{2\beta}\Big) |c_n|^2.
\end{equation}
\medskip

%%%%%%%%%%%%%%%%%%%%%%%%%%%%%%%%%%%%%%

\subsection{Estimate of the norm of a reproducing kernel}

\begin{lemma}
\label{norm}
Let $K_w^{\beta, \gamma}$ be the reproducing kernel of $\mathcal{F}_{\beta, \gamma}$ at a point $w$
and $\phi(z) = \beta \log^2 |z| + \gamma \log|z|$. Then
\begin{equation}
\label{norm1}
\|K_w^{\beta, \gamma}\|_{\beta, \gamma} \asymp \frac{e^{\phi(w)}}{|w|}, \qquad  w\in \mathbb{C} \setminus\{0\}.
\end{equation}
\end{lemma}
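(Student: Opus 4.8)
The plan is to compute the norm directly from the explicit formula for the reproducing kernel, which in this case we can write down because $\{z^n\}_{n\in\Z}$ is an orthogonal basis with the norms given by \eqref{pow}. Indeed, writing $a_n = \sqrt{2\pi/\beta}\exp\big((n+1-\gamma)^2/(2\beta)\big)$ for the squared norm of $z^n$, the reproducing kernel is
$$
K_w^{\beta, \gamma}(z) = \sum_{n\in\Z} \frac{\overline{w}^n z^n}{a_n},
$$
and therefore
$$
\|K_w^{\beta, \gamma}\|_{\beta, \gamma}^2 = K_w^{\beta, \gamma}(w) = \sum_{n\in\Z} \frac{|w|^{2n}}{a_n}
= \sqrt{\frac{\beta}{2\pi}} \sum_{n\in\Z} \exp\Big(2n\log|w| - \frac{(n+1-\gamma)^2}{2\beta}\Big).
$$
So the problem reduces to estimating a single Gaussian-type sum over $n\in\Z$, and the target \eqref{norm1} amounts to showing that this sum is comparable to $e^{2\phi(w)}/|w|^2 = \exp\big(2\beta\log^2|w| + 2\gamma\log|w| - 2\log|w|\big)$.

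First I would complete the square in the exponent. Setting $t = \log|w|$, the general term is $\exp\big(2nt - (n+1-\gamma)^2/(2\beta)\big)$. Completing the square in $n$ gives an exponent of the form $-\frac{1}{2\beta}\big(n - (\gamma - 1 + 2\beta t)\big)^2 + \big(\text{terms independent of } n\big)$, where the $n$-independent part should work out exactly to $2\beta t^2 + 2\gamma t - 2t = 2\phi(w) - 2\log|w|$ after simplification. This identifies the leading factor $e^{2\phi(w)}/|w|^2$ as the value of the Gaussian envelope at its continuous maximum, and reduces the claim to a two-sided bound on the remaining sum $\sum_{n\in\Z} \exp\big(-\frac{1}{2\beta}(n - \mu)^2\big)$, where $\mu = \mu(w) = \gamma - 1 + 2\beta\log|w|$ is the location of the peak.

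Second, I would bound this Gaussian sum by constants independent of $\mu$. The upper bound follows by comparing the sum with the integral $\int_{\R}\exp\big(-\frac{1}{2\beta}(x-\mu)^2\big)\,dx = \sqrt{2\pi\beta}$ plus a controlled error, or simply by noting that $\sum_{n\in\Z}\exp\big(-\frac{1}{2\beta}(n-\mu)^2\big)$ is a continuous periodic function of $\mu$ with period $1$ (it is a theta function), hence bounded above. For the lower bound, observe that for any $\mu$ there is an integer $n$ with $|n-\mu|\le 1/2$, so the corresponding term is at least $\exp\big(-1/(8\beta)\big)>0$; thus the sum is bounded below by a positive constant uniformly in $\mu$, and hence uniformly in $w$. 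Combining the two bounds yields $\sum_{n\in\Z}\exp\big(-\frac{1}{2\beta}(n-\mu)^2\big) \asymp 1$ with constants depending only on $\beta$, which gives \eqref{norm1}.

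I do not expect a serious obstacle here: the only point requiring a little care is the algebraic simplification in the completion of the square, where one must verify that the $n$-independent remainder collapses exactly to $2\phi(w) - 2\log|w|$ rather than leaving stray linear-in-$t$ terms. The factor $1/|w| = e^{-\log|w|}$ in \eqref{norm1}, as opposed to a clean $e^{\phi(w)}$, is precisely what this bookkeeping produces, so matching it correctly is the one step where a sign or coefficient error would be easy to make. Everything else is the standard fact that a lattice sum of a fixed Gaussian is bounded above and below independently of the shift of its center.
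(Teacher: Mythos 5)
Your proof is correct, and it takes a genuinely different route from the paper's. You compute $\|K_w^{\beta,\gamma}\|^2$ exactly via Parseval's identity in the orthogonal basis $\{z^n\}_{n\in\Z}$: since the coefficients of $K_w$ are $\overline{w}^n/a_n$, the squared norm equals $K_w(w)=\sum_n |w|^{2n}/a_n$, and after completing the square (your bookkeeping is right: with $t=\log|w|$ the $n$-independent part is exactly $2\beta t^2+2\gamma t-2t = 2\phi(w)-2\log|w|$) the claim reduces to the uniform two-sided bound $\sum_{n\in\Z}\exp\bigl(-\tfrac{1}{2\beta}(n-\mu)^2\bigr)\asymp 1$, which your periodicity/nearest-integer argument handles correctly. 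The paper instead proves the two bounds by separate arguments: the upper bound by a subharmonicity (mean-value) estimate for $|F/\Phi|^2$ on the disc $\{|z-w|<|w|/10\}$, where $\Phi$ is a holomorphic function with $|\Phi|\asymp e^{\phi}$ there, and the lower bound by testing against the single monomial $z^n$ with $n=[2\beta\log|w|]$ --- which is precisely the dominant term of your theta sum. Your approach buys a unified, essentially exact computation of the kernel norm, but it leans entirely on the radial symmetry of the weight (orthogonality of the monomials); the paper's subharmonicity argument for the upper bound is the more robust tool, as it would survive perturbations of the weight that destroy the explicit orthogonal basis.
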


\begin{proof}
The proof of this lemma can be found in \cite{bl} in a similar setting. We include here a simple proof to make the paper
more self-contained.  

We start with an estimate from above. Let $w\neq 0$ and set $D_w = \{z\in \mathbb{C}: |z-w| <|w|/10\}$. Put
$\Phi(z) = \exp(\beta \log^2 z + \gamma \log z)$, where $\log z$ is an analytic  branch of the logarithm analytic in $D_w$
such that $|\arg z| < 2\pi$. Then $|\Phi(z)| \asymp e^{\phi(z)}$, $z\in D_w$. Let $F\in \mathcal{F}_{\beta, \gamma}$,
$\|F\|_{\beta, \gamma} \le 1$. Then, by subharmonicity, 
$$
1\gtrsim \int_{D_w}\bigg| \frac{f(z)}{\Phi(z)}\bigg|^2 dm_2(z) \gtrsim |w|^2 \bigg| \frac{f(w)}{\Phi(w)}\bigg|^2.
$$
We conclude that $|f(w)| \lesssim |\Phi(w)|/|w|$ whenever $\|F\|_{\beta, \gamma} \le 1$, whence
$\|K_w^{\beta, \gamma}\|_{\beta, \gamma} \lesssim e^{\phi(w)}/|w|$.

For the estimate from below take $F(z) = z^n$, $n\in\Z$, and note that $\|F\|^2_{\beta, \gamma} 
\asymp \exp\big(\frac{(n+1-\gamma)^2}{2\beta}\big)$. Now let $w\in \mathbb{C} \setminus\{0\}$, $r=|w|$
and take $n = [2\beta \log r] = 2\beta\log r -\delta$, $0\le \delta<1$.
Then
$$
\begin{aligned}
\|K_w^{\beta, \gamma}\|_{\beta, \gamma} & \ge |F(w)|/\|F\|_{\beta, \gamma}  \gtrsim
\exp\Big( n\log r - \frac{(n+1-\gamma)^2}{4\beta} \Big) \\
& = 
\exp\Big( 2\beta \log^2 r - \delta\log r -\beta \Big(\frac{ 2\beta\log r -\delta +1-\gamma}{2\beta}   \Big)^2    \Big)  \\
& = 
\exp\big(\beta \log^2 r +(\gamma-1)\log r +O(1) \big) \asymp  \frac{e^{\phi(w)}}{|w|}.
\end{aligned}
$$
\end{proof}

%%%%%%%%%%%%%%%%%%%%%%%%%%%%%%%%%%%%%%%%

\subsection{Complete interpolating sequences in $\mathcal{F}_{\beta, 0}$}

Most of the radial Fock type spaces of entire functions 
do not have complete interpolating sequences (or, equivalently, Riesz bases 
of normalized reproducing kernels). There exists a vast literature on this subject (see, e.g., \cite{bbb, bdkel, bl, is1, is2}
and references therein). 
However, in small Fock spaces with the weights $e^{-2\phi}$, where $\phi(r) = O(\log^2 r)$ and
$\phi$ has some additional regularity
(in particular,  in Fock spaces of entire functions with $\phi(r) = \beta (\log^+ r)^2$)
there exist complete interpolating sequences. 
This was noted for the first time by Borichev and Lyubarskii in \cite{bl}.  Later
a full description of complete interpolating sequences for the Fock space with 
$\phi(r) = \beta (\log^+ r)^2$ was obtained in \cite{bdhk}.

We will need a similar description for the case of ``two-sided'' spaces 
of functions with singularities at $0$ and $\infty$. Let us introduce the notions of sampling and interpolation for the
space $\mathcal{F}_{\beta, \gamma}$. A sequence $\{\lambda_n\}$ is said to be {\it sampling} for 
$\mathcal{F}_{\beta, \gamma}$ if 
$$
\sum_{n} (1+|\lambda_n|^2) e^{-2\phi(\lambda_n)} |F(\lambda_n)|^2 \asymp \|F\|^2_{\beta, \gamma}, 
\qquad F\in \mathcal{F}_{\beta, \gamma}. 
$$
Analogously, $\{\lambda_n\}$ is {\it interpolating} for $\ff_{\beta, \gamma}$ if for every sequence $\{a_n\}$ satisfying
$$
\sum_{n} (1+|\lambda_n|^2) e^{-2\phi(\lambda_n)} |a_n|^2 <\infty
$$ 
there exists
$F\in\ff_{\beta, \gamma}$ with $F(\lambda_n) = a_n$. In view of the estimate 
\eqref{norm1} for the norm of the reproducing kernel,
we immediately see that $\{\lambda_n\}$ is sampling for $\ff_{\beta, \gamma}$ if and only if 
$\{\tilde K^{\beta,\gamma}_{\lambda_n}\}$  is
a frame in  $\mathcal{F}_{\beta, \gamma}$, where $\tilde K^{\beta,\gamma}_{\lambda} = 
K^{\beta,\gamma}_{\lambda}/\| K^{\beta, \gamma}_{\lambda}\|_{\beta, \gamma}$. It is easy to show that
$\{\lambda_n\}$ is interpolating if and only if $\{\tilde K^{\beta,\gamma}_{\lambda_n}\}$ is a Riesz sequence.
Finally, $\{\lambda_n\}$ is said to be a {\it complete interpolating sequence} for  $\mathcal{F}_{\beta, \gamma}$
if it is both interpolating and sampling (equivalently,  $\{\tilde K^{\beta,\gamma}_{\lambda_n}\}$ is a Riesz basis 
in $\mathcal{F}_{\beta, \gamma}$). 

The criterion for being a complete interpolating sequence in $\mathcal{F}_{\beta, 0}$
will be given in terms of mean deviations from a distinguished complete interpolating sequence
$\{e^{\frac{n}{2\beta}}\}_{n\in \Z}$ which was found already in \cite{bl} for the case of the corresponding space
of entire functions.

\begin{theorem} 
\label{compin}  
A sequence $\Lambda$ is a complete interpolating sequence for  $\mathcal{F}_{\beta, 0}$
if and only if  there exists an enumeration $\Lambda = \{\lambda_n\}_{n\in \Z}$ such that
$0<|\lambda_n|\le |\lambda_{n+1}|$,
$$
\lambda_n = e^{\frac{n}{2\beta}} e^{\delta_n} e^{  i \theta_n}
$$ 
with $\delta_n, \theta_n \in\R$, and the following three conditions are satisfied\textup:
   \begin{enumerate}
   \item[(i)] $\{\lambda_n\}$ is logarithmically separated, i.e., there exists
     $c >0$ such that $|\lambda_m-\lambda_n| \ge c\max(|\lambda_m|, |\lambda_n|)$, $m\ne n$\textup;
   \item[(ii)] $(\delta_n)_{n\in \Z} \in \ell^\infty$\textup;
   \item[(iii)] there exists $N\geq 1$ and $\delta>0$ such that
\begin{equation}
\label{avd}
\sup_{n\in \Z} \frac{1}{N}\Big|\sum_{k=n+1}^{n+N}\delta_k\Big| <\frac{1}{4\beta}.
\end{equation}
   \end{enumerate}
   \end{theorem}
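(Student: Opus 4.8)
The plan is to pass to logarithmic coordinates and reduce the assertion to growth estimates for a generating function, following the scheme that Borichev and Lyubarskii \cite{bl} and the authors of \cite{bdhk} used for the one-sided spaces of entire functions. Writing $z=e^{u}$, $u=s+it$, a function holomorphic in $\co\setminus\{0\}$ becomes a $2\pi i$-periodic entire function of $u$, the radial weight $e^{-2\phi}=e^{-2\beta s^2}$ becomes Gaussian in $s=\log|z|$ (after a harmless shift absorbing the Jacobian, consistent with the normalization \eqref{pow}), and the distinguished sequence $e^{n/(2\beta)}$ becomes the arithmetic progression $s_n=n/(2\beta)$ on the real axis, of density $2\beta$. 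In these coordinates $\lambda_n=e^{n/(2\beta)}e^{\delta_n}e^{i\theta_n}$ corresponds to $u_n=\tfrac{n}{2\beta}+\delta_n+i\theta_n$, so that $\delta_n$ is a displacement in the Gaussian (delicate) direction while $\theta_n$ lives in the periodic direction; this is why \eqref{avd} constrains only $\delta_n$, whereas $\theta_n$ enters solely through the separation (i). Throughout we use that, by Lemma \ref{norm}, $\Lambda$ is a complete interpolating sequence exactly when the normalized reproducing kernels $\tilde K^{\beta,0}_{\lambda_n}$ form a Riesz basis.

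The central object is the generating function $G_\Lambda$, holomorphic in $\co\setminus\{0\}$ with simple zeros exactly at $\Lambda$, built as a canonical product modelled on the theta-type function $\sum_{m\in\Z}e^{-m^2/(4\beta)}z^m$, which reproduces the weight in the sense $\log|G_\Lambda(z)|=\phi(z)+O(1)$ away from $\Lambda$ and has zeros of density $2\beta$ in $\log|z|$. The crux is to prove, under (ii) and (iii), the two-sided estimate $\log|G_\Lambda(z)|=\phi(z)+O(1)$ for $z$ at definite logarithmic distance from $\Lambda$, together with the normalization $|G_\Lambda'(\lambda_n)|\,|\lambda_n|\,e^{-\phi(\lambda_n)}\asymp1$ matching $\|K^{\beta,0}_{\lambda_n}\|$. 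The threshold $\tfrac{1}{4\beta}$ enters precisely here: a uniform shift $\delta_n\equiv\delta_0$ replaces $\phi(z)$ by $\phi(z)-2\beta\delta_0\log|z|+O(1)$, i.e.\ perturbs the effective linear coefficient by $2\beta\delta_0$, and computing the range of shifts for which $G_\Lambda$ remains admissible yields exactly the bound $|\delta_0|<\tfrac{1}{4\beta}$; condition \eqref{avd} is the localized, scale-by-scale form of this, controlling the mean of $\delta_k$ on every block. This step I expect to be the main obstacle: it is the discrete analogue of an Avdonin / Muckenhoupt ``mean'' estimate, and proving that \eqref{avd} is exactly what keeps $\log|G_\Lambda|-\phi$ bounded requires summation by parts together with the regularity supplied by (i) and (ii).

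Granting these estimates, sufficiency follows along standard lines. Weighted $\ell^2$ interpolation data $(a_n)$ are interpolated by $F(z)=\sum_n a_n\,\frac{G_\Lambda(z)}{(z-\lambda_n)G_\Lambda'(\lambda_n)}$, and the bound $\|F\|_{\beta,0}^2\lesssim\sum_n(1+|\lambda_n|^2)e^{-2\phi(\lambda_n)}|a_n|^2$ reduces, via Lemma \ref{norm} and the estimates on $G_\Lambda$, to a Schur test in which the logarithmic separation (i) guarantees absolute convergence. The upper sampling bound is a Plancherel--P\'olya inequality, again a consequence of (i) via \eqref{norm1}; the lower sampling bound follows from the completeness of $\{K^{\beta,0}_{\lambda_n}\}$, i.e.\ from $\Lambda$ being a uniqueness set, which holds because $G_\Lambda$ already attains the maximal admissible growth, so any $F\in\mathcal{F}_{\beta,0}$ vanishing on $\Lambda$ factors as $G_\Lambda h$ with $h$ forced to vanish identically. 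Hence $\{\tilde K^{\beta,0}_{\lambda_n}\}$ is simultaneously a frame and a Riesz sequence, therefore a Riesz basis, and $\Lambda$ is a complete interpolating sequence.

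For the converse, assume $\{\tilde K^{\beta,0}_{\lambda_n}\}$ is a Riesz basis. Its lower Riesz bound forces the kernels to be uniformly separated, which by \eqref{norm1} is exactly the logarithmic separation (i). A counting comparison with the distinguished sequence—sampling bounds the deficiency and interpolation the surplus of points in each logarithmic annulus—pins $\rea u_n-\tfrac{n}{2\beta}=\delta_n$ to be bounded, giving (ii). Finally, were \eqref{avd} to fail, there would be arbitrarily long blocks on which the mean of $\delta_k$ exceeds $\tfrac{1}{4\beta}$; on such blocks the growth of $G_\Lambda$ would deviate from $\phi$ by a term of order $\log|z|$, producing either a nonzero element of $\mathcal{F}_{\beta,0}$ orthogonal to every $K^{\beta,0}_{\lambda_n}$ or $\ell^2$ data with no bounded interpolant, contradicting the Riesz-basis property; thus \eqref{avd} holds. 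The only genuinely new feature relative to the one-sided spaces of \cite{bl,bdhk}—the presence of two essential singularities, at $0$ and at $\infty$—is handled uniformly by the involution $z\mapsto1/z$, which exchanges them within the scale $\{\mathcal{F}_{\beta,\gamma}\}$ and lets the estimates near $0$ be transferred from those near $\infty$.
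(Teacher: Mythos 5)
Your proposal follows essentially the same route as the proof this paper relies on: the paper gives no self-contained argument for Theorem \ref{compin}, but cites \cite{bdhk} (one-sided small Fock spaces) and \cite{bbg}, asserting that the proof transfers without substantial change to Laurent series, and your scheme --- logarithmic coordinates, a generating function $G_\Lambda$ with $\log|G_\Lambda|=\phi+O(1)$, the $\tfrac{1}{4\beta}$ threshold computed from uniform shifts, Lagrange interpolation with a Schur test for sufficiency, separation/counting arguments for necessity, and the involution $z\mapsto 1/z$ to handle the second singularity --- is precisely the scheme of those papers. Note only that, as a standalone proof, your sketch leaves unproven exactly the analytic core (that condition \eqref{avd} is equivalent to keeping $\log|G_\Lambda|-\phi$ bounded and hence to the Riesz-basis bounds), a step you yourself flag as the main obstacle; that step is the content of \cite{bdhk}, which the paper likewise does not reproduce.
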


This theorem was proved in \cite{bdhk} for Fock type spaces of entire functions (one-sided power series)
with $\phi(r) = \beta (\log^+r)^2$,
but the proof applies without substantial changes to the case of Laurent series. Also, the ``two-sided'' 
case (at least for $\lambda_n\in\R$) is contained in \cite{bbg}, where the relation between 
complete   interpolating sequences for $\mathcal{F}_{\beta, 0}$ and for Gaussian invariant spaces was established. 
Now we discuss this relation in details. 
\medskip

%%%%%%%%%%%%%%%%%%%%%%%%%%%%%%%%%%

\subsection{Gaussian invariant spaces and Fock spaces}
\label{gaus}

In this subsection we recall the isomorphism between Gaussian invariant spaces and Fock spaces
of the form $\mathcal{F}_{\beta, 0}$. This 
isomorphism (essentially different from the one in Section \ref{cauchy}) was the key step 
in the study of complete interpolating sequences for the
Gaussian invariant spaces in \cite{bbg}. Here we consider 
a slightly more general situation of complex Gaussian functions. 
For $\alpha>0$ and $\sigma \in \R$ let $g_\sigma (x) = e^{-(\alpha+i\sigma) x^2}$. 
Then any $f\in V^2(g_\sigma)$ is of the form
$$
f(x) = \sum_{n\in \Z} c_ne^{- (\alpha+i\sigma)(x-n)^2} = e^{-(\alpha+i\sigma) x^2}\sum_{n\in \Z} c_n e^{-\alpha n^2 -i\sigma n^2}
\, e^{2(\alpha+i\sigma)nx},  \qquad (c_n) \in \ell^2(\Z),
$$
and $\|f\|_{V^2(g_\sigma)} \asymp \|(c_n)\|_{\ell^2(\Z)}$.
Introducing a new variable $z=e^{2(\alpha+i\sigma)x}$, we see that $ V^2(g_\sigma)$ 
is isomorphic to the space of functions representable as
$$
\bigg\{ F(z) = \sum_{n\in \Z} d_n z^n: \ \  \|F\|^2 : = \sum_{n\in \Z}
|d_n|^2 e^{2\alpha (n+1)^2} <\infty\bigg\},
$$
which is, in view of \eqref{pow}, isomorphic to $\mathcal{F}_{\beta, 0}$ with $\beta = \frac{1}{4\alpha}$.

In view of this isomorphism  a sequence 
$\{x_m\} \subset\R$ is sampling for $V(g_\sigma)$ if and only if the sequence 
$\{e^{(\alpha+i\sigma) x_m}\}$ is sampling for $\mathcal{F}_{\beta, 0}$.

We will need the following theorem analogous to Theorems B and C. 
It is by no means new, but we include a very short proof to make the exposition self-contained. 

\begin{mainthm}
\label{thm:mainD}
Let $\Lambda\subset \R$ be a separated set. Then the following statements are equivalent\textup:

1. $\mathcal{G}(g_\sigma, \Lambda\times \mathbb{Z})$ is a frame in $L^2(\mathbb{R})$\textup;

2. $\Lambda + x$ is a sampling set for $V^2(g_\sigma)$ for all $x\in \R$ with uniform bounds on sampling constants\textup;

3. $D^-(\Lambda) > 1$.
\end{mainthm}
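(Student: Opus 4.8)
The plan is to obtain $1\Leftrightarrow 2$ from the general criterion of Theorem~\ref{thm:mainB} and then to identify Statement~2 with Statement~3 by transporting the sampling problem to the Fock space $\mathcal{F}_{\beta,0}$ via the isomorphism of Subsection~\ref{gaus}. First I would record that $g_\sigma$ meets the hypotheses of Theorem~\ref{thm:mainB}: since $|g_\sigma(x)| = e^{-\alpha x^2}$ decays rapidly we have $g_\sigma\in W_0$, and the stability estimate \eqref{stab} is classical for (chirped) Gaussians and can be read off from the computation in Subsection~\ref{gaus}, where the shifts $g_\sigma(\cdot-n)$ are carried to the orthogonal monomials $z^n$ (see \eqref{pow}). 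Theorem~\ref{thm:mainB}, together with the uniformization remark following it, then yields the equivalence of Statements~1 and~2 verbatim, so that the whole content is the equivalence $2\Leftrightarrow 3$.

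Next I would reduce the chirped case to the real one. By the isomorphism of Subsection~\ref{gaus} (with $\beta=\frac{1}{4\alpha}$), a separated set $\{x_m\}$ is sampling for $V^2(g_\sigma)$ if and only if $\{e^{2(\alpha+i\sigma)x_m}\}$ is sampling for $\mathcal{F}_{\beta,0}$, and likewise $\{x_m\}$ is sampling for the real Gaussian $V^2(g_0)$ if and only if $\{e^{2\alpha x_m}\}$ is sampling for $\mathcal{F}_{\beta,0}$. These two point sets have identical moduli $e^{2\alpha x_m}$ and differ only by the rotations $e^{2i\sigma x_m}$. The key point is that sampling in the radial space $\mathcal{F}_{\beta,0}$ depends only on the distribution of the moduli: by Lemma~\ref{norm} the kernel norm depends only on $|w|$, and in Theorem~\ref{compin} the quantitative conditions (ii)--(iii) involve only the radial deviations $\delta_n$, the arguments $\theta_n$ entering solely through the logarithmic separation (i), which holds automatically because $\Lambda$ is separated. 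Granting this radial independence, sampling for $V^2(g_\sigma)$ is equivalent to sampling for $V^2(g_0)$, and the same equivalence holds for every translate $\Lambda+x$. Since $g_0=e^{-\alpha x^2}$ is a Gaussian totally positive function of finite type, the equivalence with $D^-(\Lambda)>1$ --- including the strict inequality forced by the quantifier ``for all $x$'' --- is then contained in Theorem~\ref{thm:mainA} (combined with the $\sigma=0$ instance of $1\Leftrightarrow2$), completing $2\Leftrightarrow3$.

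The step I expect to be the main obstacle is precisely this radial independence: proving that, for log-separated sequences, the argument components $\theta_n$ do not affect the sampling (frame) bounds in $\mathcal{F}_{\beta,0}$, so that the general $\sigma$ reduces to $\sigma=0$. Most of what is needed is available in \cite{bl,bdhk,bbg} through the complete interpolating sequence description of Theorem~\ref{compin}, where only the radial data enter the decisive average-deviation condition \eqref{avd}; the remaining work is to package these results into the ``all translates'' sampling statement and to verify the angular terms are harmless. Once that reduction is transparent, the equivalence $2\Leftrightarrow3$, and hence the theorem, follows from Theorems~\ref{thm:mainB} and~\ref{thm:mainA}.
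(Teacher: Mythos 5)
Your reduction of Statement 1 to Statement 2 via Theorem \ref{thm:mainB}, and the transport of the sampling problem to $\mathcal{F}_{\beta,0}$ with $\beta=\frac{1}{4\alpha}$, are both fine and consistent with the paper. The genuine gap is exactly the step you yourself flag as the main obstacle: the claim that sampling in $\mathcal{F}_{\beta,0}$ depends only on the moduli of the points, so that the spiral $\{e^{2(\alpha+i\sigma)x_m}\}$ is sampling if and only if its radial projection $\{e^{2\alpha x_m}\}$ is. You justify this by appealing to Theorem \ref{compin}, but that theorem characterizes \emph{complete interpolating} sequences (Riesz bases of normalized kernels), not sampling sequences. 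Sampling sets form a strictly larger class --- any set containing a complete interpolating sequence is sampling without being one --- and no characterization of sampling sets in $\mathcal{F}_{\beta,0}$ appears anywhere in the paper. Hence the fact that conditions (ii)--(iii) of Theorem \ref{compin} are radial says nothing, by itself, about whether rotating each point of a \emph{sampling} set individually preserves the sampling property. To close the gap you would need a radially-stated description of sampling sets (for instance a ``sampling iff the set contains a complete interpolating sequence'' statement, or the density-type description of sampling in \cite{bdhk}); such a result is not in the paper, is proved in \cite{bdhk} only for the one-sided (entire) case, and would additionally require checking that the radial projection of the relevant subsequence stays logarithmically separated (true here because the moduli $e^{2\alpha x_m}$ are multiplicatively separated, but it must be said). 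The quantifier ``for all translates $x$'' does not help with this angular obstruction.

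The paper's own proof sidesteps the issue entirely and establishes $1\Leftrightarrow 3$ directly by a chirp/shear argument: from the identity
$$
\Big(e^{-(\alpha+i\sigma)(x-n)^2}e^{-2\pi i\lambda x}, f\Big) = e^{-i\sigma n^2}\Big(e^{-\alpha(x-n)^2}e^{-2\pi i(\lambda-\frac{\sigma n}{\pi})x}, e^{i\sigma x^2}f\Big)
$$
and the unitarity of $f\mapsto e^{i\sigma x^2}f$ on $L^2(\R)$, the system $\mathcal{G}(g_\sigma,\Lambda\times\Z)$ is a frame iff $\mathcal{G}(g_0,S)$ is, where $g_0(x)=e^{-\alpha x^2}$ and $S=\big\{\big(n,\lambda-\frac{\sigma n}{\pi}\big)\big\}$ is a sheared copy of the product set. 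The shear has determinant $1$, so $D_2^-(S)=D^-(\Lambda)$, and Theorem \ref{thm:main0} (irregular planar sampling for the Gaussian) concludes. Note that this is precisely the correct implementation of your idea of ``untwisting the spiral'': the untwisting is a linear change of variables of the whole time--frequency plane, which mixes the two directions and therefore does not act on the shift-invariant-space/Fock-space picture you work in --- which is exactly why your argument cannot be completed inside $\mathcal{F}_{\beta,0}$ without importing new results beyond Theorem \ref{compin}.
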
  

\begin{proof}
Statements 2 and 3 are equivalent by Theorem \ref{thm:mainB}. Let us show that Statements 1 and 3 are equivalent.
Note that for $f\in L^2(\R)$ we have
$$
\Big(e^{-(\alpha+i\sigma) (x-n)^2} \, e^{-2\pi i \lambda x}, f \Big) = e^{-i\sigma n^2}
\Big(e^{-\alpha (x-n)^2}\, e^{-2\pi i (\lambda -\frac{\sigma n}{\pi})x}, e^{i\sigma x^2} f \Big).
$$
Since the map $f\mapsto e^{i\sigma x^2} f $ is a unitary operator on $L^2(\R)$, we conclude that
$\mathcal{G}(g_\sigma, \Lambda\times \mathbb{Z})$ is a frame in $L^2(\mathbb{R})$ if and only 
$\mathcal{G}(g_0, S)$ is a frame, where 
$$
S = \Big\{ \Big(n, \lambda - \frac{\sigma n}{\pi}\Big):\, n\in \Z, \, \lambda\in \Lambda \Big\}
$$
and $g_0 (x) = e^{-\alpha x^2}$. It is easy to see that $D_2^-(S) = D^-(\Lambda)$.
Indeed, the mapping $(x,y) \mapsto \big(x, y+\frac{\sigma x}{\pi}\big)$
has determinant 1, whence it preserves the area of the parallelograms forming the lattice.

By Theorem \ref{thm:main0}, $\mathcal{G}(g_0, S)$ is a frame if and only if $D_2^-(S) >-1$.
Thus, $\mathcal{G}(g_\sigma, \Lambda\times \mathbb{Z})$ is a frame in $L^2(\mathbb{R})$ if and only if $D^-(\Lambda) >1$.
\end{proof}
\bigskip

%%%%%%%%%%%%%%%%%%%%%%%%%%%%%%%%%%%%%%%

\section{The space $\mathcal{H}_{a,b}$ coincide with $\mathcal{F}_{\beta, \gamma}$} 

The following theorem is the most important technical step of the proof of Theorem \ref{main1}:

\begin{theorem}
\label{coin}
We have $\mathcal{H}_{a,b} = \mathcal{F}_{\beta, \gamma}$ as sets with equivalence of norms for
$$ 
\beta = \frac{1}{2 \rea (a+b)}, \qquad \gamma = \frac{1}{2} +\frac{\rea a}{\rea (a+b)}.
$$
\end{theorem}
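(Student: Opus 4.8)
The plan is to exhibit an explicit orthonormal basis of $\mathcal{H}_{a,b}$ and to prove that it is a Riesz basis of $\mathcal{F}_{\beta,\gamma}$. Since both spaces are reproducing kernel Hilbert spaces of functions holomorphic on $\co\setminus\{0\}$, convergence in either norm implies locally uniform convergence, so a Riesz basis relation forces the two spaces to coincide as sets of functions with equivalent norms. Concretely, writing $w_n=e^{(a+b)n}$, the very definition of $\mathcal{H}_{a,b}$ shows that the functions
$$
v_n(z)=e^{an}\,\frac{G(z)}{z+w_n},\qquad n\in\Z,
$$
form an orthonormal basis of $\mathcal{H}_{a,b}$: the map $(c_n)\mapsto\sum_n c_n v_n$ is by definition an isometry of $\ell^2(\Z)$ onto $\mathcal{H}_{a,b}$. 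Thus the theorem reduces to the single statement that $\{v_n\}$ is a Riesz basis of $\mathcal{F}_{\beta,\gamma}$.

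The heart of the proof is a sharp two-sided estimate for $G$. Since $G(z)=(1+z)A(z)A(1/z)$ has simple zeros exactly at the points $-w_n$, $n\in\Z$, and depends only on $a+b$, I expect to prove that, with $q=\rea(a+b)$ and $\beta=\frac{1}{2q}$,
$$
|G(z)|\asymp \mathrm{dist}\big(z,\{-w_n\}_{n\in\Z}\big)\cdot|z|^{-1/2}\,e^{\beta\log^2|z|},\qquad z\in\co\setminus\{0\}.
$$
The leading term $e^{\beta\log^2|z|}$ comes from the standard computation $\log|A(z)|=\sum_{n\geq1}\log|1+z/w_n|\sim\frac{1}{2q}\log^2|z|$, obtained by splitting the sum at $n\approx\frac{\log|z|}{q}$. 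The subleading factor $|z|^{-1/2}$ is consistent with (and forced by) the functional equation $zG(1/z)=G(z)$, immediate from $G(1/z)=(1+1/z)A(1/z)A(z)=z^{-1}(1+z)A(z)A(1/z)$. The main obstacle is precisely to make this estimate uniform and to control $G$ near its zeros: when $\ima(a+b)\neq0$ the zeros $-w_n=-e^{qn}e^{i\,\ima(a+b)\,n}$ spiral, so $\sum_n\log|1+z/w_n|$ is an oscillatory sum, and one must run an Euler--Maclaurin type analysis carefully (keeping the half-integer correction that produces the exponent $-1/2$) while isolating the single nearby zero to recover the factor $\mathrm{dist}(z,\{-w_n\})$. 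In particular, since the zeros are simple, the estimate yields $|G'(-w_n)|\asymp |w_n|^{-1/2}e^{\beta\log^2|w_n|}$.

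Given this estimate, the remaining steps are soft. First, integrating the weight $e^{-2\phi(z)}=|z|^{-2\gamma}e^{-2\beta\log^2|z|}$ against the estimate for $|G(z)/(z+w_n)|^2$ (the integral concentrates on the cell around $-w_n$, where $\mathrm{dist}(z,\{-w_n\})\asymp|z+w_n|$) gives
$$
\|v_n\|_{\beta,\gamma}=e^{n\rea a}\Big\|\tfrac{G}{z+w_n}\Big\|_{\beta,\gamma}\asymp e^{n\rea a}\,|w_n|^{1/2-\gamma}=e^{\,n\left(\rea a+(1/2-\gamma)q\right)}.
$$
This is bounded above and below \emph{precisely} when $\gamma=\frac12+\frac{\rea a}{\rea(a+b)}$, which is how the stated value of $\gamma$ arises: $G$ itself depends only on $a+b$ and carries no information about $\gamma$; it is the weight $e^{an}$ in the definition of $\mathcal{H}_{a,b}$ that fixes it. Thus each $v_n\in\mathcal{F}_{\beta,\gamma}$ with $\|v_n\|_{\beta,\gamma}\asymp1$. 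Moreover, since $v_n=e^{an}G'(-w_n)\,g_n$ with $g_n(z)=\frac{G(z)}{(z+w_n)G'(-w_n)}$, and $g_n(-w_m)=\delta_{nm}$, the function $g_n$ is exactly the biorthogonal function of the node sequence $\{-w_n\}$.

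It then remains to show that $\{-w_n\}$ is a complete interpolating sequence for $\mathcal{F}_{\beta,\gamma}$; this makes the normalized reproducing kernels $\tilde K^{\beta,\gamma}_{-w_n}$ a Riesz basis, whose (uniquely determined) dual Riesz basis is $\{\|K^{\beta,\gamma}_{-w_n}\|_{\beta,\gamma}\,g_n\}$, and since $v_n$ equals $g_n$ times a scalar of modulus $\asymp1$, the system $\{v_n\}$ is then a Riesz basis, as required. I will deduce the complete interpolating property from Theorem \ref{compin} after reducing to $\gamma=0$ by the dilation $F(z)\mapsto F(z/c)$ with $c=e^{\gamma/(2\beta)}$, which is, up to a constant factor, an isometry of $\mathcal{F}_{\beta,\gamma}$ onto $\mathcal{F}_{\beta,0}$ carrying $\{-w_n\}$ to $\{-cw_n\}$. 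The latter sequence has moduli $e^{q(n+\gamma)}=e^{(n+\gamma)/(2\beta)}$; enumerating so that the point of modulus $e^{(n+\gamma)/(2\beta)}$ plays the role of index $n+1$ (legitimate since $\gamma\in(\tfrac12,\tfrac32)$, because $\frac{\rea a}{\rea(a+b)}\in(0,1)$) yields the constant deviations $\delta_n\equiv\frac{\gamma-1}{2\beta}$, which satisfy $|\delta_n|<\frac{1}{4\beta}$. Logarithmic separation, condition (i), is immediate from $|w_m/w_n|=e^{q|m-n|}$, and (ii) is trivial, so all three conditions of Theorem \ref{compin} hold and the proof is complete.
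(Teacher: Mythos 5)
Your proposal is correct and takes essentially the same route as the paper: the core steps — the two-sided estimate $|G(z)|\asymp \mathrm{dist}(z,\{-w_m\})\,|z|^{-1/2}\exp\big(\tfrac{\log^2|z|}{2\rea(a+b)}\big)$ (the paper's Lemma \ref{lem0}), the reduction of $\mathcal{F}_{\beta,\gamma}$ to $\mathcal{F}_{\beta,0}$ by dilation (Lemma \ref{lem2}), the verification via Theorem \ref{compin} that $\{-w_m\}$ is a complete interpolating sequence (your constant deviation $\delta_k\equiv\tfrac{\gamma-1}{2\beta}$ with $|\gamma-1|<\tfrac12$ is the same arithmetic as the paper's check that $\tfrac{\gamma}{2\beta}\neq\tfrac{1}{4\beta}+\tfrac{k}{2\beta}$), and the transfer of the Riesz-basis property to the biorthogonal functions $G(z)/(z+w_m)$ — all coincide with the paper's proof. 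The only minor deviations are that you obtain the normalization $\|v_n\|_{\beta,\gamma}\asymp 1$ by integrating the $|G|$-estimate directly, where the paper instead combines the kernel-norm estimate (Lemma \ref{norm}) with $|G'(-w_m)|$, and that the $|G|$-estimate you flag as the main obstacle needs no Euler--Maclaurin or oscillatory analysis: because the moduli $|w_n|$ are geometrically separated, the spiraling of the zeros is harmless and a term-by-term comparison of the factors $|1+z/w_n|$ with $|z|/|w_n|$, $1$, or the single nearby factor gives the estimate at once.
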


In the proof we will use several lemmas. Recall that $k_w^{a,b}$ denotes the reproducing kernel of  $\mathcal{H}_{a,b}$.
To simplify the notations, let $s = \rea a$, $t=\rea b$.

\begin{lemma}
\label{lem0}
Let $G$ be given by \eqref{gen} and $W = \{-w_m\}_{m\in\Z}$, $w_m = e^{(a+b)m}$. Then, for $w\in \mathbb{C}\setminus\{0\}$,
\begin{equation}
\label{gen1}
|G(w)| \asymp \exp\Big( \frac{\log^2 |w|}{2(s+t)} -\frac{\log|w|}{2}\Big) {\rm dist}(w, W),
\end{equation}
\begin{equation}
\label{norm2}
\|k_w^{a,b}\| \asymp \exp\Big( \frac{\log^2 |w|}{2(s+t)} +\Big(\frac{s}{s+t} -\frac{1}{2} \Big) \log|w|\Big).
\end{equation}
\end{lemma}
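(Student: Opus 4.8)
The plan is to prove \eqref{gen1} first, as a pointwise estimate for the canonical product $G$, and then to deduce \eqref{norm2} from \eqref{gen1} together with the explicit formula for $k_w^{a,b}$ recorded in Section \ref{cauchy}. Throughout I write $r=|w|$ and $M=\frac{\log r}{s+t}$, so that $|w/w_n|=r\,e^{-(s+t)n}$ crosses $1$ near $n=M$. I will repeatedly use the functional equation $G(1/z)=G(z)/z$, which follows at once from \eqref{gen}, and the fact that $W=\{-w_n\}$ is invariant under $z\mapsto 1/z$ and logarithmically separated: for $m\ne n$ one has $|w_m-w_n|\gtrsim\max(|w_m|,|w_n|)$ because $|e^{a+b}|=e^{s+t}>1$, so the nearest zero of $G$ to a given $w$ is unique at scale $r$.

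For \eqref{gen1} I would reduce to $r\ge 1$ using $G(1/z)=G(z)/z$ together with the relation ${\rm dist}(1/w,W)\asymp{\rm dist}(w,W)/|w|^2$, which holds because $\frac1w+\frac1{w_n}=\frac1{w_n}\cdot\frac{w+w_n}{w}$. Taking logarithms,
$$
\log|G(w)|=\log|1+w|+\sum_{n\ge1}\log\Big|1+\frac{w}{w_n}\Big|+\sum_{n\ge1}\log\Big|1+\frac{1}{w w_n}\Big|,
$$
I split the first (dominant) sum according to whether $|w/w_n|$ is large, small, or close to $1$. For $n$ well below $M$ one has $\log|1+w/w_n|=\log r-(s+t)n+O(r^{-1}e^{(s+t)n})$, and the errors sum to $O(1)$; for $n$ well above $M$ the terms are $O(r\,e^{-(s+t)n})$ and again sum to $O(1)$, as does the entire second sum since $|1/(w w_n)|\le r^{-1}e^{-(s+t)n}$. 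Summing the principal part $\sum_{0\le n\lesssim M}(\log r-(s+t)n)$ by comparison with $\int_0^{M}(\log r-(s+t)x)\,dx$ produces the main term $\frac{\log^2 r}{2(s+t)}$. The finitely many transitional terms with $|w/w_n|\asymp1$ are $O(1)$, except for the single index $n_0$ for which $-w_{n_0}$ is the nearest zero: for it, $\log|1+w/w_{n_0}|=\log\frac{|w+w_{n_0}|}{|w_{n_0}|}=\log{\rm dist}(w,W)-\log r+O(1)$, which contributes the factor ${\rm dist}(w,W)$. This gives $|G(w)|\asymp\exp\big(\frac{\log^2 r}{2(s+t)}+C\log r\big){\rm dist}(w,W)$ for some real $C$, and I would finally fix $C=-\frac12$ by applying this (already established) formula to both $w$ and $1/w$ with ${\rm dist}(w,W)\asymp r$ and comparing against $|G(1/w)|=|G(w)|/r$; the symmetry forces $-(C+1)=C$, hence $C=-\frac12$.

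To pass to \eqref{norm2}, recall from Section \ref{cauchy} that, since $e^{(\bar a+\bar b)n}=\overline{w_n}$,
$$
\|k_w^{a,b}\|^2=k_w^{a,b}(w)=|G(w)|^2\sum_{n\in\Z}\frac{e^{2ns}}{|w+w_n|^2}=:|G(w)|^2\,\Sigma(w).
$$
I would show $\Sigma(w)\asymp r^{2s/(s+t)}/{\rm dist}(w,W)^2$. The term $n=n_0$ alone equals $e^{2n_0 s}/{\rm dist}(w,W)^2\asymp r^{2s/(s+t)}/{\rm dist}(w,W)^2$, because $n_0=\frac{\log r}{s+t}+O(1)$ gives $e^{2n_0 s}\asymp r^{2s/(s+t)}$; this is the lower bound. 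For the upper bound I use $|w+w_n|\asymp\max(r,|w_n|)$ for $n\ne n_0$: the terms with $n>n_0$ are $\lesssim e^{-2tn}$ and those with $n<n_0$ are $\lesssim e^{2ns}/r^2$, so both tails are geometric series ($t=\rea b>0$) whose sums are $\lesssim r^{2s/(s+t)}/{\rm dist}(w,W)^2$ after using ${\rm dist}(w,W)\lesssim r$. Combining $\Sigma(w)\asymp r^{2s/(s+t)}/{\rm dist}(w,W)^2$ with \eqref{gen1} cancels the distance factors and yields exactly \eqref{norm2}; as a consistency check, at $w=-w_m$ this matches $\|k^{a,b}_{-w_m}\|=|G'(-w_m)|\,e^{ms}$ read off from the orthogonal-basis formula.

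The main obstacle is the uniform, in $w$, bookkeeping of the transitional terms $|w/w_n|\asymp1$ in the proof of \eqref{gen1}: one must guarantee that precisely one of them carries the vanishing factor ${\rm dist}(w,W)$, while the remaining $O(1)$ transitional terms stay bounded away from both $0$ and $\infty$; this is exactly where the logarithmic separation of $W$ is indispensable, and where the passage between the regimes $w$ near a zero and $w$ far from $W$ must be controlled with uniform constants. The oscillation of $\arg(w/w_n)$ coming from nonreal $a,b$ enters only through these bounded correction terms and never affects the leading $\log^2$ term or the linear coefficient, so the complex case costs nothing beyond a careful tracking of constants.
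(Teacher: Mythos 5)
Your architecture is sound and, for the most part, is the paper's: split $\log|G(w)|$ at the transition index $n_0\approx\frac{\log r}{s+t}$, let the sub-transitional terms produce the quadratic term, isolate the single near-zero factor (logarithmic separation guaranteeing its uniqueness, exactly as you say), reduce $|w|<1$ to $|w|\ge 1$ via $G(1/z)=G(z)/z$ and ${\rm dist}(1/w,W)\asymp{\rm dist}(w,W)/|w|^2$, and then deduce \eqref{norm2} from \eqref{gen1} by showing $\sum_n e^{2sn}|w+w_n|^{-2}\asymp e^{2sn_0}\,{\rm dist}(w,W)^{-2}$ via geometric tails. Your treatment of \eqref{norm2} is correct and coincides with the paper's (which also dominates the Cauchy sum by its nearest-pole term).

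The genuine gap is in pinning down the coefficient $-\frac{1}{2}$ of $\log|w|$ in \eqref{gen1}. Your sum-versus-integral comparison cannot see it: for the linear function $f(x)=\log r-(s+t)x$, the discrepancy between $\sum_{n\le M}f(n)$ and $\int_0^{M}f(x)\,dx$ is $\pm\frac{1}{2}\log r+O(1)$ (a boundary term), i.e. exactly the size of the term at stake --- which is why you are left with an undetermined $C$. The symmetry argument you propose to fix $C$ is circular: applying $G(1/z)=G(z)/z$ to the already-established large-$|w|$ formula with coefficient $C$ yields the small-$|w|$ formula with coefficient $-(C+1)$, and this is consistent for \emph{every} $C$; the equation $-(C+1)=C$ would follow only if you knew beforehand that the small-$|w|$ coefficient equals the large-$|w|$ one, which is part of the statement being proved --- and you have no independent derivation of the small-$|w|$ case, since you reduced it to the large-$|w|$ case at the outset. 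That symmetry genuinely cannot force equality of the two coefficients is shown by $P(z)=A(z)A(1/z)$: it satisfies $P(1/z)=P(z)$ exactly, its zero set $W\setminus\{-1\}$ is inversion-invariant, yet its profile carries the coefficient $-\frac{3}{2}$ at infinity and $-\frac{1}{2}$ at the origin, the asymmetry being injected precisely by the Jacobian in ${\rm dist}(1/w,\cdot)\asymp{\rm dist}(w,\cdot)/|w|^2$. The repair is elementary and is what the paper does: evaluate the principal sum exactly as an arithmetic progression, $\sum_{n=1}^{m}\big(\log r-(s+t)n\big)=m\log r-\frac{(s+t)m(m+1)}{2}$ with $m=\frac{\log r-\delta}{s+t}$, $|\delta|\le\frac{s+t}{2}$; the $\delta\log r$ cross terms cancel and $-\frac{1}{2}$ drops out. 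This precision is not cosmetic: the $-\frac{1}{2}$ in \eqref{gen1} is what produces the exponent $\frac{s}{s+t}-\frac{1}{2}$ in \eqref{norm2} and hence $\gamma=\frac{1}{2}+\frac{s}{s+t}$ in Theorem \ref{coin}, where the proof hinges on $0<\frac{s}{s+t}<1$ to verify that $\{-w_m\}$ is a complete interpolating sequence; a coefficient off by $\pm\frac{1}{2}$ or left undetermined breaks that step.
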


\begin{proof}
Assume first that $\log |w| = (s+t)m+\delta$, where $m\ge 1$ and $|\delta|\le (s+t)/2$. Then 
${\rm dist}(w, W)\asymp |w+w_m|$. We have $|A(1/w)| \asymp 1$  and 
$$
|G(w)|\asymp |w| \cdot \prod_{n=1}^{m-1}\frac{|w|}{|w_n|} \cdot \frac{|w+w_m|}{|w_m|} = |w|^m \cdot
|w+w_m| \cdot \prod_{n=1}^m e^{-(s+t)n}.
$$
since $|w_m| = e^{(s+t)m}$. Hence, 
$$
\begin{aligned}
\log|G(w)| - \log |w+w_m|  & = m\log|w| - \frac{m(m+1)(s+t)}{2} \\
& =  \frac{\log|w| -\delta}{s+t}\log|w| -
\frac{\log|w| -\delta}{2}\Big(\frac{\log|w| -\delta}{s+t} +1\Big)  \\ 
& = \frac{\log^2|w|}{2(s+t)}- \frac{\log|w|}{2} +O(1).
\end{aligned}
$$
If $|w| <1$, we use the fact that $|G(w)| = |wG(1/w)|$ and ${\rm dist}(1/w, W) \asymp {\rm dist}(w, W)/|w|^2$.

Now let $\log |w| = (s+t)m+\delta$, where $m\in \Z$ and $|\delta|\le (s+t)/2$. Assume also that $w\notin W$.
Then 
$$
\|k_w^{a,b}\|  = |G(w)|\Bigg(\sum_{n\in\Z} \frac{e^{2sn}}{|w+w_n|^2}\Bigg)^{1/2} \asymp  \frac{|G(w)|e^{sm}}{|w+w_m|}
\asymp \frac{|G(w)|}{|w+w_m|}e^{\frac{s}{s+t} \log|w|}.
$$
Then the estimate for $\|k_w^{a,b}\|$ follows from the estimate for $|G(w)|$ (the case $w\in W$ follows by continuity).
\end{proof}
  
\begin{lemma}
\label{lem2}
For any $\gamma\in\R$ there exists $c>0$ such that $\{F(cz) : \ F\in  \mathcal{F}_{\beta, \gamma} \} 
= \mathcal{F}_{\beta, 0}$.

In particular, a set $\{u_m\}$ is sampling \textup(interpolating\textup) for 
$\mathcal{F}_{\beta, \gamma} $ if and only if $\{c u_m\}$ is sampling
 \textup(respectively, interpolating\textup)  for $\mathcal{F}_{\beta, 0}$. 
\end{lemma}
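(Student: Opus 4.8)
The plan is to realize $\mathcal{F}_{\beta,0}$ as a dilate of $\mathcal{F}_{\beta,\gamma}$ and to read off the correct dilation factor from a change of variables in the norm integral. I would introduce the dilation operator $D_c\colon F\mapsto F(c\,\cdot)$ with a parameter $c>0$ to be chosen, take $F\in\mathcal{F}_{\beta,\gamma}$, and compute
\begin{equation*}
\|D_cF\|_{\beta,0}^2 = \int_{\co} |F(cz)|^2 e^{-2\beta\log^2|z|}\,dm_2(z).
\end{equation*}
Substituting $w=cz$ and using $\log|z|=\log|w|-\log c$ together with $dm_2(z)=c^{-2}dm_2(w)$, the exponent turns into $-2\beta\log^2|w|+4\beta(\log c)\log|w|$ plus an additive constant. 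The quadratic term $\log^2|w|$ reproduces exactly the weight of $\mathcal{F}_{\beta,0}$, so the only thing to arrange is that the surviving linear term $4\beta(\log c)\log|w|$ matches the term $-2\gamma\log|w|$ coming from $\gamma\log|w|$ in $\phi$. This forces $\log c=-\gamma/(2\beta)$, i.e. $c=e^{-\gamma/(2\beta)}$; with this choice $\|D_cF\|_{\beta,0}^2=\mathrm{const}\cdot\|F\|_{\beta,\gamma}^2$. Since $D_c$ is a bijection of $Hol(\co\setminus\{0\})$, this yields at once $\{F(cz):F\in\mathcal{F}_{\beta,\gamma}\}=\mathcal{F}_{\beta,0}$ with equivalence (in fact proportionality) of norms, which is the first assertion.

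For the second assertion I would transport the sampling and interpolation conditions along $D_c$. The key structural fact is that $D_c$ intertwines point evaluations, $F(w)=(D_cF)(w/c)$, so the values entering the weighted sums are unchanged once the node set $\Lambda=\{u_m\}$ for $\mathcal{F}_{\beta,\gamma}$ is replaced by its rescaling $c^{-1}\Lambda$ (which, after renaming the dilation constant, is the set $c\Lambda$ appearing in the statement). What remains is to verify that the two families of weights are comparable, i.e. that
\begin{equation*}
(1+|w|^2)\,e^{-2\phi_\gamma(w)} \asymp (1+|w/c|^2)\,e^{-2\phi_0(w/c)}, \qquad w\in\co\setminus\{0\},
\end{equation*}
where $\phi_\gamma(w)=\beta\log^2|w|+\gamma\log|w|$ and $\phi_0(w)=\beta\log^2|w|$. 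Here $1+|w/c|^2\asymp 1+|w|^2$ because $c$ is a fixed positive constant, and a direct expansion gives $\phi_\gamma(w)-\phi_0(w/c)=(2\beta\log c+\gamma)\log|w|+\mathrm{const}$; the linear term again cancels precisely for $c=e^{-\gamma/(2\beta)}$, leaving only an additive constant. Hence the weights are uniformly comparable.

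Combining the norm equivalence of the first step with this weight comparison, the sampling inequality for $\Lambda$ in $\mathcal{F}_{\beta,\gamma}$ becomes, term by term, the sampling inequality for the rescaled set in $\mathcal{F}_{\beta,0}$, and the same substitution converts the interpolation condition for $\mathcal{F}_{\beta,\gamma}$ into that for $\mathcal{F}_{\beta,0}$, since both notions are phrased through sums carrying exactly these weights. I do not expect any genuine obstacle here: the whole argument is a single change of variables. The only point demanding care is the coordinated choice of the dilation constant $c$, which must simultaneously cancel the linear-in-$\log|w|$ term in the norm integral and in the weight comparison; that both cancellations are governed by the same value $c=e^{-\gamma/(2\beta)}$ is what makes the two parts of the lemma follow in one stroke.
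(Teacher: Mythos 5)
Your proof is correct and follows essentially the same route as the paper: a change of variables in the norm integral with the dilation constant fixed by $2\beta\log c+\gamma=0$, followed by a transfer of the sampling/interpolation conditions (you compare the explicit weights $(1+|w|^2)e^{-2\phi(w)}$, while the paper compares reproducing-kernel norms, which by Lemma \ref{norm} amounts to the same thing). Your observation that the node set is rescaled by $c^{-1}$ rather than $c$ --- so the constant in the second assertion is the reciprocal of the one in the first, harmless after renaming --- is a careful and correct point that the paper's own wording glosses over.
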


\begin{proof} 
We have, by the change of variable $z=cw$,
$$
\|F\|_{\beta, \gamma}^2= c^2
\int_{\mathbb{C}} |F(cw)|^2 e^{-2 \beta (\log|w| +\log c)^2 -2\gamma (\log|w| +\log c)} dm_2(w).
$$
It remains to choose $c$ so that $2\beta \log c+\gamma=0$.

We have $\|k^{\beta, \gamma}_{cw}\|_{\beta, \gamma} \asymp \|k^{\beta, 0}_w\|_{\beta, 0}$ and it follows that
a set $\{u_m\}$ is sampling for $\mathcal{F}_{\beta, \gamma} $ if and only if $\{c u_m\}$ is sampling for
$\mathcal{F}_{\beta, 0}$. 
\end{proof}

\begin{proof}[Proof of Theorem \ref{coin}]
First we show that $\{-w_m\}_{m\in\Z}$, where $w_m = e^{(a+b)m}$, is a complete interpolating sequence
for $\mathcal{F}_{\beta, \gamma} $. 
By Lemma \ref{lem2}, this is equivalent to the property that $\{-cw_m\}_{m\in\Z}$ 
with $2\beta \log c+\gamma=0$ is a complete interpolating 
sequence for  $\mathcal{F}_{\beta, 0} $. Note that
the description of complete interpolating sequences for $\mathcal{F}_{\beta, 0} $ in Theorem \ref{compin} 
essentially depends only on the moduli of the points, and so it is sufficient to show that the sequence
$|-cw_m| = c e^{(s+t)m} = c e^{\frac{m}{2\beta}}$ is a complete interpolating 
sequence for  $\mathcal{F}_{\beta, 0}$. We have
$$
c|w_m| = \exp \Big( \frac{m}{2\beta}-\frac{\gamma}{2\beta}  \Big).
$$
By Theorem \ref{compin}, the set $\{\exp \big( \frac{m}{2\beta} - s  \big)\}_{m\in\Z}$ (a shift of the standard complete
interpolating sequence) is complete interpolating if and only if $s\ne \frac{1}{4\beta} +\frac{k}{2\beta}$ for some $k\in \Z$, since 
in this case one can achieve  \eqref{avd} shifting the enumeration if necessary.
Note that
$$
\frac{\gamma}{2\beta} = \frac{1}{4\beta} + \frac{s}{s+t}\cdot\frac{1}{2\beta} \ne \frac{1}{4\beta} +\frac{k}{2\beta},
$$
since $0< \frac{s}{s+t} <1$. 

We conclude that $\{-w_m\}_{m\in\Z}$ is a complete interpolating sequence
for $\mathcal{F}_{\beta, \gamma} $. Therefore, the system of normalized kernels
\begin{equation}
\label{rb1}
\big\{ K^{\beta, \gamma}_{-w_m}/ \|K^{\beta, \gamma}_{-w_m}\|_{\beta, \gamma} \big\}_{m\in\Z}
\end{equation}
is a Riesz basis for $\mathcal{F}_{\beta, \gamma} $.
Let us consider its biorthogonal system (which is a Riesz basis as well).
It follows from \eqref{gen1} that $\frac{G(z)}{z+w_m} \in \mathcal{F}_{\beta, \gamma}$ for any $m$. Hence, the biorthogonal 
system to \eqref{rb1} is given by $\{G_m\}_{m\in\Z}$, 
$$
G_m(z) = \frac{\|K^{\beta, \gamma}_{-w_m}\|_{\beta, \gamma}}{G'(-w_m)} \cdot \frac{G(z)}{z+w_m}.
$$
In view of \eqref{norm1} and \eqref{gen1} we have
$$
\frac{\|K^{\beta, \gamma}_{-w_m}\|_{\beta, \gamma}}{|G'(-w_m)|} \asymp 
 \exp\Big( \frac{\log^2 |w_m|}{2(s+t)} +\Big( \frac{s}{s+t} - \frac{1}{2}\Big) \log|w_m|\Big) \cdot
\exp\Big( -\frac{\log^2 |w_m|}{2(s+t)} +\frac{\log|w_m|}{2}\Big) =e^{sm},
$$
since $w_m = e^{(s+t)m}$. Since $G_m$ is a Riesz basis in $\mathcal{F}_{\beta, \gamma}$, it follows that 
$\mathcal{F}_{\beta, \gamma} $ coincides with the set of functions representable as
$$
F(z) = G(z) \sum_{m\in\Z} c_m \frac{e^{sm}}{z+w_m},\qquad (c_m)\in\ell^2(\Z),
$$
and $\|F\|_{\beta,\gamma} \asymp \|(c_m)\|_{\ell^2(\Z)}$. 
Recall that $s=\rea a$. Thus, $\mathcal{F}_{\beta, \gamma} = \mathcal{H}_{a,b}$
with equivalence of norms.
\end{proof}
\medskip

Now we can prove Theorem  \ref{main2}. 

\begin{proof}[Proof of Theorem \ref{main2}]
By the isomorphisms between $V^2(g)$, $\mathcal{H}_{a,b}$ and $\mathcal{F}_{\beta, \gamma}$, and by Lemma \ref{lem2}
we have that $\{\lambda_n\}_{n\in\Z}$ is a complete interpolating sequence for $V^2(g)$ if and only if
$\{e^{(a+b)\lambda_n} \}_{n\in\Z}$ is a complete interpolating sequence for  $\mathcal{F}_{\beta, \gamma}$
and if and only if $\{ce^{(a+b)\lambda_n} \}_{n\in\Z}$ is a complete interpolating sequence for  $\mathcal{F}_{\beta, 0}$,
where 
$$
\beta = \frac{1}{2(s+t)}, \qquad \gamma = \frac{1}{2} +\frac{s}{s+t}, \qquad \log c  = -\frac{\gamma}{2\beta}.
$$
Since $\{\lambda_n\}_{n\in\Z}$ is separated and complete interpolating sequences in $\mathcal{F}_{\beta, 0}$ are rotation 
invariant, one more equivalent condition is that 
$\{ce^{(s+t)\lambda_n} \}_{n\in\Z}$ is a complete interpolating sequence for  $\mathcal{F}_{\beta, 0}$,
where  $s = \rea a$, $t=\rea b$.
We write $\lambda_n = n+\delta_n$, $n\in\Z$. Then
$$
ce^{(s+t)\lambda_n} = \exp\Big( \frac{n}{2\beta} +\frac{\delta_n -\gamma}{2\beta}\Big).
$$
By Theorem \ref{compin}, this sequence is a complete interpolating sequence for  $\mathcal{F}_{\beta, 0}$ 
if and only if after an appropriate change
of enumeration (which in our case means simply a shift of the index) one has
$$
\sup_{n\in \Z} \frac{1}{N}\Big|\sum_{k=n+1}^{n+N}\Big(\delta_k -\gamma\Big)\Big|  = 
\sup_{n\in \Z} \frac{1}{N}\Big|\sum_{k=n+1}^{n+N}\Big(\delta_k -\frac{1}{2} - \frac{s}{s+t}\Big) \Big| 
<\frac{1}{2}.
$$
Shifting the index by 1, we obtain another enumeration for which
$$
\sup_{n\in \Z} \frac{1}{N}\Big|\sum_{k=n+1}^{n+N}\Big(\delta_k +\frac{1}{2} - \frac{s}{s+t} \Big) \Big|
<\frac{1}{2}.
$$

The fact that any separated sequence $\Lambda$ with $D^{-}({\Lambda})> 1$ contains 
a complete interpolating sequence for $V^2(g)$, while any separated sequence with
$D^{+}({\Lambda})< 1$ can be appended to a complete interpolating sequence for $V^2(g)$ follows in
exactly the same way as in \cite{bbg}.
\end{proof}

We see that, as in the case of the Gaussian, among all lattices obtained by a shift of $\Z$ there is exactly one 
which fails to be a complete interpolating sequence in $V^2(g)$. 
\bigskip
 
%%%%%%%%%%%%%%%%%%%%%%%%%%%%%%%%%%%%%%%

\section{Proof of Theorem \ref{main1}} 

By standard rescaling argument we can assume that $\alpha=1$. As above we write $s = \rea a$, $t=\rea b$.

\begin{lemma}
\label{lem3}
A sequence $\Lambda = \{\lambda_m\}$ is sampling for $V^2(g)$ with $g$ given by \eqref{hyp} 
if and only the sequence 
$\{e^{(a+b) \lambda_m}\}$ is sampling for $\mathcal{H}_{a,b} = \mathcal{F}_{\beta, \gamma}$ 
with $\beta = \frac{1}{2(s+t)}$, $\gamma = \frac{1}{2} +\frac{s}{s+t}$.
Moreover, there exist positive constants $c=c(a,b)$, $C=C(a,b)$ such that if 
$$
   A\|f\|^2 \le \sum_{n\in \Z} |f(\lambda_n)|^2 \le B\|f\|^2, \qquad f\in V^2(g),
$$
then, for $u_m = e^{(a+b) \lambda_m}$,
$$
\tilde A \|F\|_{\mathcal{H}_{a,b}}^2 \le 
\sum_m \|k_{u_m}^2\|_{\mathcal{H}_{a,b}}^{-2} |F(u_m)|^2 \le \tilde B \|F\|_{\mathcal{H}_{a,b}}^2,
\qquad F\in \mathcal{H}_{a,b},
$$
where $\tilde A \ge cA$, $\tilde B \le CB$.
\end{lemma}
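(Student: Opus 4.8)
The plan is to push the sampling inequality through the isomorphism \eqref{daf} of Section~\ref{cauchy} and then to match the two weight sequences by means of Lemma~\ref{lem0}. First I would record the pointwise identity linking $f\in V^2(g)$ with its image $F\in\mathcal{H}_{a,b}$. From the computation preceding \eqref{daf}, for every $x\in\R$ and $z=e^{(a+b)x}$ one has
$$
f(x)=e^{bx}\sum_{n\in\Z}\frac{c_ne^{an}}{z+w_n}=e^{bx}\,\frac{F(z)}{G(z)},
$$
so that at the sampling points, with $u_m=e^{(a+b)\lambda_m}$ and $t=\rea b$,
$$
|f(\lambda_m)|^2=e^{2t\lambda_m}\,\frac{|F(u_m)|^2}{|G(u_m)|^2}.
$$
Since \eqref{stab} gives $\|f\|_{L^2}^2\asymp\|(c_n)\|_{\ell^2}^2=\|F\|_{\mathcal{H}_{a,b}}^2$ with constants depending only on $a,b$, after this substitution the sampling inequality for $V^2(g)$ and the one for $\mathcal{H}_{a,b}$ differ only by the factor relating the weights $e^{2t\lambda_m}|G(u_m)|^{-2}$ and $\|k_{u_m}^{a,b}\|^{-2}$. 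Thus the whole lemma reduces to the single pointwise comparison $e^{2t\lambda_m}\,|G(u_m)|^{-2}\asymp\|k_{u_m}^{a,b}\|^{-2}$, uniformly in $m\in\Z$.

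To check this I would substitute \eqref{gen1} and \eqref{norm2} using $\log|u_m|=(s+t)\lambda_m$ (here $s=\rea a$). A direct computation shows that the $\lambda_m^2$-terms and the linear-in-$\lambda_m$ terms cancel exactly, leaving
$$
\frac{e^{2t\lambda_m}\,|G(u_m)|^{-2}}{\|k_{u_m}^{a,b}\|^{-2}}\asymp\frac{|u_m|^2}{{\rm dist}(u_m,W)^2}.
$$
So everything hinges on the estimate ${\rm dist}(u_m,W)\asymp|u_m|$, and this is the step I expect to be the only real obstacle.

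I would prove ${\rm dist}(u_m,W)\asymp|u_m|$ geometrically, observing that both $W=\{-w_j\}$ and $\{u_m\}$ lie on the logarithmic spiral $\{e^{(a+b)y}:y\in\R\}$. Factoring out $|u_m|$,
$$
\frac{{\rm dist}(u_m,W)}{|u_m|}=\inf_{j\in\Z}\Big|1+\frac{w_j}{u_m}\Big|=\inf_{j\in\Z}\big|1+e^{(a+b)(j-\lambda_m)}\big|\ge\inf_{y\in\R}\big|1+e^{(a+b)y}\big|.
$$
The last infimum is a strictly positive constant depending only on $a,b$: the equality $e^{(a+b)y}=-1$ is impossible, since $|e^{(a+b)y}|=e^{(s+t)y}=1$ forces $y=0$, where $e^{(a+b)y}=1$; moreover the curve tends to $1$ as $y\to-\infty$ and has modulus tending to $\infty$ as $y\to+\infty$, while staying continuous and bounded away from $-1$ on compacta, so the infimum over $\R$ is positive. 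This is precisely the condition $e^{(a+b)x}\ne-1$, i.e. the non-vanishing of $e^{ax}+e^{-bx}$ on $\R$ that makes $g$ well defined. The reverse bound ${\rm dist}(u_m,W)\le|u_m|+|w_{j_0}|\lesssim|u_m|$ follows at once by taking $j_0$ the integer nearest to $\lambda_m$. I note that this argument requires no separation of $\Lambda$; separation enters only through the ambient hypotheses of Theorem~\ref{main1}.

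Finally I would assemble the constants. Writing $c_1\|F\|^2\le\|f\|^2\le c_2\|F\|^2$ from \eqref{stab} and $c_3\|k_{u_m}^{a,b}\|^{-2}\le e^{2t\lambda_m}|G(u_m)|^{-2}\le c_4\|k_{u_m}^{a,b}\|^{-2}$ from the comparison above (all $c_i=c_i(a,b)>0$), the sandwich
$$
c_3\sum_m\|k_{u_m}^{a,b}\|^{-2}|F(u_m)|^2\le\sum_m|f(\lambda_m)|^2\le c_4\sum_m\|k_{u_m}^{a,b}\|^{-2}|F(u_m)|^2
$$
turns the hypothesis $A\|f\|^2\le\sum_m|f(\lambda_m)|^2\le B\|f\|^2$ into the asserted bounds with $\tilde A=c_1A/c_4$ and $\tilde B=c_2B/c_3$, whence $\tilde A\ge cA$ and $\tilde B\le CB$. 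The converse implication, that sampling of $\{u_m\}$ in $\mathcal{H}_{a,b}$ forces sampling of $\Lambda$ in $V^2(g)$, follows from the very same identities, since $f\mapsto F$ is an isomorphism.
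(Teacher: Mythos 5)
Your proof is correct and follows essentially the same route as the paper's: transfer the sampling inequality through the isomorphism \eqref{daf}, compare the weights $e^{2t\lambda_m}|G(u_m)|^{-2}$ and $\|k^{a,b}_{u_m}\|^{-2}$ via Lemma \ref{lem0}, and reduce everything to the estimate ${\rm dist}(u_m,W)\asymp|u_m|$, which you justify (as the paper does) by the non-vanishing of $1+e^{(a+b)y}$ on $\R$. The only cosmetic difference is that you make the cancellation of the quadratic and linear terms and the tracking of constants fully explicit, which the paper leaves partly implicit.
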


\begin{proof} 
Let $u_m =e^{(a+b) \lambda_m}$ and let $f \in V^2(g)$ and $F\in \mathcal{H}_{a,b}$ be related via \eqref{daf}. 
Then 
$$
\|f\| \asymp \|F\|_{\mathcal{H}_{a,b}}, \qquad \sum_{m} |f(\lambda_m)|^2 \asymp \sum_m
|G(u_m)|^{-2}e^{2t\lambda_m} |F(u_m)|^2.
$$

Let us show that ${\rm dist}\, (u_m, W) \asymp |u_m|$, where $W = \{-w_k\}$.
Since for $\rea a, \rea b>0$ we have $e^{ax}+e^{-bx} \ne 0$, $x\in \R$, there exists $\delta>0$ such that 
$|e^{(a+b)x} +1| \ge \delta$,  $x\in \R$. Hence, $|e^{(a+b) \lambda_m}+e^{(a+b) k}| = 
|u_m| \cdot |e^{(a+b) (k-\lambda_m)} +1| \ge\delta |u_m|$.
Since $|u_m| = e^{(s+t)\lambda_m}$, it follows from \eqref{gen1} and \eqref{norm2} that
$$
\begin{aligned}
|G(u_m)|e^{-t\lambda_m} & \asymp
\exp\Big( \frac{\log^2 |u_m|}{2(s+t)} +\frac{\log|u_m|}{2} - t\lambda_m\Big) \\
& =
\exp\Big( \frac{\log^2 |u_m|}{2(s+t)} +\frac{s-t}{2}\lambda_m\Big) \asymp \|k^{a,b}_{u_m}\|_{\mathcal{H}_{a,b}}
\end{aligned}
$$
with the constant depending on $a,b$ only. Thus, $\|F\|_{\mathcal{H}_{a,b}}^2
\asymp \sum_m \|k^{a,b}_{u_m}\|^{-2}_{\mathcal{H}_{a,b}} |F(u_m)|^2$.
\end{proof}

\begin{lemma}
\label{lem4}
The sequence $\{e^{(a+b) (\lambda_m+x)}\}$ is sampling for $ \mathcal{F}_{\beta, \gamma}$ for any $x\in \R$ 
if and only if $\{e^{(a+b) (\lambda_m+x)}\}$ is sampling for $ \mathcal{F}_{\beta, 0}$  for any $x\in \R$.
\end{lemma}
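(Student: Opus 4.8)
The plan is to reduce the statement to two ingredients that are already at our disposal: the dilation identity of Lemma \ref{lem2}, which converts $\mathcal{F}_{\beta,\gamma}$-sampling into $\mathcal{F}_{\beta,0}$-sampling at the cost of a fixed positive scaling factor $c$, and the rotation invariance of the sampling property in the radial space $\mathcal{F}_{\beta,0}$. The key observation is that the scaling by $c$ can be absorbed into a shift of the parameter $x$, which is exactly what rescales the moduli of the sampling points; the leftover phase will be a single rotation common to all points, and hence harmless. Crucially, this absorption is available to us precisely because both statements quantify over all $x\in\R$.

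First I would set up notation. Write $a+b=(s+t)+i\tau$ with $\tau=\ima(a+b)$, and put $u_m(x)=e^{(a+b)(\lambda_m+x)}$, so that $|u_m(x)|=e^{(s+t)(\lambda_m+x)}$ and $\arg u_m(x)=\tau(\lambda_m+x)$. By Lemma \ref{lem2}, for every fixed $x$ the set $\{u_m(x)\}_m$ is sampling for $\mathcal{F}_{\beta,\gamma}$ if and only if $\{c\,u_m(x)\}_m$ is sampling for $\mathcal{F}_{\beta,0}$, where $c>0$ satisfies $2\beta\log c+\gamma=0$. A direct computation with $e^{(a+b)\frac{\log c}{s+t}}=c\,e^{i\theta_0}$ gives
$$
c\,u_m(x)=e^{-i\theta_0}\,u_m\Big(x+\tfrac{\log c}{s+t}\Big),\qquad \theta_0=\frac{\tau\log c}{s+t},
$$
so that multiplication by $c$ is the same as shifting $x$ by $\frac{\log c}{s+t}$ and then rotating by the single $m$-independent angle $\theta_0$.

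Next I would record that sampling in $\mathcal{F}_{\beta,0}$ is rotation invariant. Since the weight $\phi(z)=\beta\log^2|z|$ is radial, the operator $R_{\theta_0}F(z)=F(e^{-i\theta_0}z)$ is a unitary of $\mathcal{F}_{\beta,0}$ onto itself which maps reproducing kernels to reproducing kernels, $R_{\theta_0}K^{\beta,0}_w=K^{\beta,0}_{e^{i\theta_0}w}$, and preserves their norms (consistent with \eqref{norm1} depending only on $|w|$). As a unitary carries frames of normalized kernels to frames, the rotated family $\{c\,u_m(x)\}_m=e^{-i\theta_0}\{u_m(x+\frac{\log c}{s+t})\}_m$ is sampling for $\mathcal{F}_{\beta,0}$ if and only if $\{u_m(x+\frac{\log c}{s+t})\}_m$ is. Combining this with Lemma \ref{lem2}, for each fixed $x$,
$$
\{u_m(x)\}_m \ \text{is sampling for}\ \mathcal{F}_{\beta,\gamma}
\iff
\{u_m(x+\tfrac{\log c}{s+t})\}_m \ \text{is sampling for}\ \mathcal{F}_{\beta,0}.
$$

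Finally, since $x\mapsto x+\frac{\log c}{s+t}$ is a bijection of $\R$, I would quantify over all $x\in\R$ on both sides of the last equivalence to conclude that $\{u_m(x)\}$ is sampling for $\mathcal{F}_{\beta,\gamma}$ for every $x$ if and only if $\{u_m(x)\}$ is sampling for $\mathcal{F}_{\beta,0}$ for every $x$, which is the assertion. I expect the only genuine point to be the rotation-invariance step: it is needed exactly when $a+b\notin\R$, which is the novel case of the paper, whereas for $\tau=0$ one has $\theta_0=0$ and the displayed identity is an exact reparametrization with no rotation involved.
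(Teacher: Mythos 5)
Your proposal is correct and follows essentially the same route as the paper's own proof: apply Lemma \ref{lem2} to pass from $\mathcal{F}_{\beta,\gamma}$ to $\mathcal{F}_{\beta,0}$ at the cost of the factor $c$, absorb that factor into a shift of $x$ (your $\frac{\log c}{s+t}$ is exactly the paper's $x_0$ with $\rea(a+b)x_0=\log c$), and dismiss the leftover unimodular factor by rotation invariance of $\mathcal{F}_{\beta,0}$. Your write-up merely makes explicit the unitarity of the rotation operator and its action on reproducing kernels, which the paper leaves as a one-line remark.
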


\begin{proof} 
By Lemma \ref{lem2}
$\mathcal{F}_{\beta, 0} = \{F(cz) : \ F\in  \mathcal{F}_{\beta, \gamma} \}$ for some $c>0$, and a set 
$\{u_m\}$ is sampling for $\mathcal{F}_{\beta, \gamma} $ if and only if $\{c u_m\}$ is sampling for
$\mathcal{F}_{\beta, 0}$. 

Let $c=e^{y_0}$ and choose $x_0\in\R$ such that $y_0 = \rea (a+b) x_0$. Since
$\{e^{(a+b) (\lambda_m+x -x_0)}\}$ is sampling for $ \mathcal{F}_{\beta, \gamma}$ for any $x\in \R$ 
we conclude that $\{c e^{(a+b) (\lambda_m+x-x_0)}\} = 
\{e^{-i \ima (a+b)x_0} e^{(a+b) (\lambda_m+x)}\}$ is sampling for $ \mathcal{F}_{\beta, 0}$ for any $x\in \R$ and vice versa.
The unimodular factor is harmless since $ \mathcal{F}_{\beta, 0}$ is rotation invariant. 
\end{proof}
\medskip

\begin{proof}[Proof of Theorem \ref{main1}]
 We will show that the following statements are equivalent. 
 \medskip

 (i) Let $\mathcal{G}(g, \Lambda\times \mathbb{Z})$ be a frame in $L^2(\mathbb{R})$.
 \medskip

 By Theorem C this is equivalent to 
 \medskip
 
 (ii) $\Lambda + x$ is a sampling set for $V^2(g)$ for all $x\in \R$. 
\medskip

By Lemmas \ref{lem3} and \ref{lem4} this is equivalent to the fact that 
\medskip

(iii) The sequence
$\{e^{(a+b) (\lambda_m+x)}\}$ is sampling for $\mathcal{F}_{\beta, 0}$, where $\beta = \frac{1}{2\rea (a+b)}$ for any 
\medskip
$x\in \R$.

Using the isomorphism between the Gaussian shift invariant space $V^2(h)$ with $h(x) = e^{-(a+b) x^2} $
and $ \mathcal{F}_{\beta, 0}$ (see Subsection \ref{gaus}), we conclude that statement (iii) is equivalent to 
\medskip

(iv) $\Lambda + x$ is a sampling set for $V^2(h)$ for all $x\in \R$.
\medskip

Applying Theorem D for the window $h$ we see that (iv) is equivalent to 
\medskip

(v) $D^{-}({\Lambda})> 1$.
\medskip

Theorem  \ref{main1} is proved.
\end{proof}

\end{document}